      \theoremstyle{plain}
      \newtheorem{theorem}{Theorem}[section]
      \newtheorem{lemma}[theorem]{Lemma}
      \newtheorem{coro}[theorem]{Corollary}
      \newtheorem{prop}{Proposition}[section]
      \newtheorem{conj}[theorem]{Conjecture}
      \theoremstyle{plain}
      \def\@setcopyright{}
      \def\serieslogo@{}
\begin{document}

%



   \author{Adrien Boyer}
   \address{Weizmann Institute of Science, Rehovot, Israel}
   \email{adrien.boyer@weizmann.ac.il }
 \thanks{This work is supported by ERC Grant 306706. }


  
   \title[Harish-Chandra--Schwartz's algebras]{Harish-Chandra--Schwartz's algebras associated with discrete subgroups of Semisimple Lie groups}



      \subjclass[2010]{Primary  46H15, 43A90, 22E40; Secondary 22D20, 22D25, 46L80}

   \keywords{Harish-Chandra--Schwartz's spaces, semisimple Lie groups, Harish-Chandra functions, Furstenberg boundary, property RD, K-theory, Baum-Connes conjecture}

  
   \dedicatory{}



   \maketitle



\begin{abstract}
 We prove that the Harish-Chandra--Schwartz space associated with a discrete subgroup of a semisimple Lie group is a dense subalgebra of the reduced  $C^*$-algebra of the discrete subgroup. Then, we prove that for the reduced $C^*$-norm is controlled by the norm of the Harish-Chandra--Schwartz space. This inequality is weaker than property RD and holds for any discrete group acting isometrically, properly on a Riemannian symmetric space. 
\end{abstract}

\section{Introduction}

A length function on a locally compact group $G$ is a measurable function $L:G \rightarrow \mathbb{R^{*}_{+}}$  satisfying 
$L(e)=0$ where $e$ is the neutral element of the group, $L(g^{-1})=L(g)$ and $L(gh)\leq L(g)+L(h)$ for all $g,h $ in $G$. Define the Sobolev norm of parameter $d>0$ as $$\|f\|_{H_{L}^{d}(G)}:= \bigg(\int _{G}|f(g)|^{2}(1+L(g))^{2d}{\rm d }g \bigg)^{\frac{1}{2}}$$ where $f$ is in $C_{c}(G)$ the space of complex-valued continuous functions on $G$ with compact support and ${\rm d}g$ a (left) Haar measure on $G$.
The Sobolev space associated with $G$  denoted by $H_{L}^{d}(G)$ is the completion of $C_{c}(G)$ with respect to $\|\cdot \|_{H_{L}^{d}(G)}$.\\
A locally compact group $G$ has \emph{property RD (rapid decay) with respect to $L$} if there exists $d>0$ such that $H_{L}^{d}(G)$ convolves $L^{2}(G)$ in $L^{2}(G)$ in the following manner: there exists $C>0$ such that for all compactly supported functions $f$ in  $C_{c}(G)$, we have 
\begin{equation}\label{RD}
\|\lambda_{G}(f)\|_{op}\leq C \|f\|_{H_{L}^{d}(G)},
\end{equation} where $\lambda_{G}$ denotes the left regular representation of $G$, namely $\lambda_{G}(f)v=f*v$ where $f$ in $C_{c}(G)$, where $v\in L^{2}(G)$ and $\|\cdot\|_{op}$ denotes the operator norm. Notice that $\|\lambda_{G}(f)\|_{op}$ is nothing but $\|f\|_{C_{r}^{*}(G)}$ i.e. the norm of $f$ in the reduced $C^*$-algebra of $G$ denoted by $C_{r}^*(G)$.\\
 For an introduction to property RD we refer to the recent and exhaustive survey of Chatterji \cite{Ch1} and to the survey of Sapir \cite{S} for different methods of proving or disproving property RD.\\
  The semisimple Lie groups satisfy property RD  and this fact could be traced back to the
work of Herz \cite{He}  (of course he did not use this terminology) and it is essentially based on the estimates of the Harish-Chandra function \cite{Boy1}. This suggests that the essence of property RD is in harmonic analysis. For a characterization of Lie groups having property RD we refer to \cite{CPS}.\\ 
As for discrete groups, it has been first established for the free groups by Haagerup in \cite{H}. The terminology Property RD
has been introduced and studied as such by Jolissaint in \cite{J}, who notably established it for groups of polynomial growth and some hyperbolic gloups although the general case of hyperbolic groups is due to de la Harpe \cite{dlH}. In \cite{J2}, Jolissaint showed in particular that the space of rapidly decreasing functions associated with a discrete group $\Gamma$ endowed with a length function $L$, namely the space $\cap_{d>0}H_{L}^{d}(\Gamma)$, is a dense subalgebra of $C^{*}_{r}(\Gamma)$ and the inclusion induces isomorphisms in topological K-theory. \\ The first example of higher rank discrete groups having property RD is due to Ramagge, Robertson and Steger \cite{RRS}. Then, Lafforgue, inspired by the methods of \cite{RRS}, proved that cocompact lattices in $SL_{3}(\mathbb{R})$ and $SL_{3}(\mathbb{C})$ satisfy property RD. Note that Lafforgue proved in \cite{La2} that for $d$ large enough, the Sobolev space $H_{L}^{d}(\Gamma)$ associated with any discrete group $\Gamma$ is a dense Banach subalgebra of $C^{*}_{r}(\Gamma)$ and the inclusion induces isomorphisms in topological K-theory. For examples of other groups satisfying property RD we refer to \cite{BP1},  \cite{BP2},  \cite{Ch2}, \cite{Ch3}, \cite{DS}. \\
 This property  also plays a crucial role in the proof of the Novikov conjecture for hyperbolic groups by Connes and Moscovici in \cite{CM}, and Lafforgue used it to prove Baum-Connes conjecture for some groups having property (T).    The major problem related to property RD is the Valette conjecture:
\begin{conj} (The Valette conjecture)\\
Property RD holds for any discrete group acting isometrically, properly and cocompactly either on a Riemannian symmetric space or on an affine building.
\end{conj}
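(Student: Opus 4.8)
The plan is to prove the inequality \eqref{RD} for $\Gamma$ a cocompact lattice in a semisimple Lie group $G$, taking for $L$ the word length, which is quasi-isometric to the Riemannian distance $\gamma \mapsto d(\gamma\cdot o,o)$ on the symmetric space $G/K$, and hence to the Cartan projection. Following the classical scheme of Haagerup \cite{H} and Herz \cite{He}, I would first reduce the full inequality to a uniform estimate on spheres. Writing $f=\sum_{n\geq 0} f_n$ with $f_n$ supported on the annulus $S_n=\{\gamma : n\leq L(\gamma)<n+1\}$, the triangle inequality together with a Cauchy--Schwarz summation in $n$ shows that property RD is equivalent to the existence of a polynomial $P$ with
\begin{equation*}
\|\lambda_\Gamma(f_n)\|_{op}\leq P(n)\,\|f_n\|_{2}\qquad\text{for all }n\text{ and all }f_n\text{ supported on }S_n.
\end{equation*}
So the whole problem is concentrated on a single sphere.

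The harmonic-analytic input is the Cowling--Haagerup--Howe estimate for $G$, restricted to $\Gamma$, which is exactly the Harish-Chandra--Schwartz inequality established in this paper: up to a polynomial factor, $\|\lambda_\Gamma(f)\|_{op}$ is dominated by the $\ell^2$-norm of $f$ weighted by $\Xi^{-1}$, where $\Xi$ is the Harish-Chandra function of $G$. On the sphere $S_n$ this reads
\begin{equation*}
\|\lambda_\Gamma(f_n)\|_{op}\leq C(1+n)^{d}\Big(\sum_{\gamma\in S_n}|f_n(\gamma)|^2\,\Xi(\gamma)^{-2}\Big)^{1/2}.
\end{equation*}
The difficulty is that $\Xi$ decays exponentially: for $\gamma$ deep in a Weyl chamber, $\Xi(\gamma)^{-1}$ is of exponential size in $L(\gamma)$, so replacing the weight by its supremum on $S_n$ only yields an exponential bound, far from the required polynomial $P(n)$. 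This is precisely why the inequality proved here is strictly weaker than property RD. What saves the \emph{average}, and what one must learn to exploit pointwise, is temperedness: by lattice-point equidistribution $\Gamma$ inherits the bound $\sum_{\gamma\in S_n}\Xi(\gamma)^2\leq\mathrm{poly}(n)$, the numerical reflection of $\Xi\in L^{2+\varepsilon}(G)$.

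To turn this averaged balance into the uniform sphere estimate, I would refine the decomposition along the $KAK$ (Cartan) decomposition, splitting $f_n$ according to the \emph{direction} of the Cartan projection inside the positive chamber and seeking almost-orthogonality of the resulting pieces $\lambda_\Gamma(f_n^{(\theta)})$ across directions. For each fixed direction the $\Xi$-weight is essentially constant, and the counting bound then matches the exponential weight, leaving only polynomial growth, reproducing for $G$ itself the way radial functions are handled through spherical-function transforms. The genuinely hard step, and the reason the conjecture remains open, is the \textbf{combinatorial control of the lattice}: one must bound the number of factorisations $\gamma=\alpha\beta$ with $\alpha\in S_k$ and $\beta\in S_{n-k}$ in the prescribed directions, equivalently count near-geodesic triangles in $G/K$ with vertices in the $\Gamma$-orbit. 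In rank one the space is Gromov-hyperbolic, geodesics are essentially unique, and this count is polynomial, recovering the known RD for hyperbolic lattices; in the affine-building case the analogous count was carried out by Ramagge--Robertson--Steger \cite{RRS} and by Lafforgue for $\tilde A_2$. In higher rank, however, the abundance of flats produces exponentially many near-geodesic decompositions, and no geometric substitute for that building combinatorics is known for a general symmetric space; closing this gap is the crux on which a proof of the full conjecture would stand or fall.
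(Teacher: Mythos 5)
The statement you were asked about is not a theorem of the paper: it is the Valette conjecture, which the paper records as an open problem (``This problem in this generality is considered very difficult'') and then deliberately sidesteps, proving instead the strictly weaker Harish-Chandra--Schwartz inequality $\|\lambda_\Gamma(f)\|_{op}\leq C\|f\|_{S^d_L(\Gamma)}$. So there is no proof in the paper to compare yours against, and your proposal is not a proof either --- you say so yourself when you flag the ``genuinely hard step, and the reason the conjecture remains open.'' As a submission claiming to establish the statement, it therefore has a declared, unfilled gap, and that gap is the entire content of the conjecture.

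To be concrete about where your argument stops: the reduction to spheres, the passage from the paper's weighted $\ell^\infty$ bound to a weighted $\ell^2$ bound on each annulus $S_n$, and the polynomial counting estimate $\sum_{\gamma\in S_n}\Xi(\gamma)^2\leq \mathrm{poly}(n)$ (which follows from $\sum_\Gamma \Xi^2(\gamma)(1+L(\gamma))^{-2d}<\infty$, cited in the paper from \cite{Boy2}) are all correct and are exactly the point at which the paper's result is sharp: the weight $\Xi^{-1}$ is exponentially large pointwise, so the sup over $S_n$ is useless, and only the average is controlled. Your proposed remedy --- decompose $f_n$ by Cartan direction, prove almost-orthogonality of the pieces $\lambda_\Gamma(f_n^{(\theta)})$ across directions, and bound the number of factorisations $\gamma=\alpha\beta$ with prescribed Cartan data --- is precisely the Ramagge--Robertson--Steger/Lafforgue scheme, and neither the almost-orthogonality nor the polynomial triangle count is known for a general higher-rank symmetric space; the abundance of flats is exactly what defeats the rank-one and $\tilde A_2$ arguments. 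Note also that this paper contributes nothing toward closing that gap: its inequality holds for \emph{all} discrete subgroups, including non-uniform lattices, which provably fail RD, so it cannot by itself be an ingredient that forces RD. Your text is a reasonable account of the state of the art, but it should be presented as such, not as a proof.
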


This problem in this generality is considered very difficult. In this paper we restrict our attention to discrete subgroups of semisimple Lie groups. We investigate another inequality involving convolutor norm of finitely supported functions and the decay of the Harish-Chandra function.  
 
 Let $G$ be a (non compact) connected semisimple real Lie group with finite center. Let $\frak{g}$ be its Lie algebra. Let $\theta$ be a Cartan involution. Define the bilinear form denoted: for all $X,Y \in \frak{g}$, $\langle X,Y \rangle=-B(X,\theta(Y))$ where $B$ is the Killing form. Set $|X|=\sqrt{\langle X,X \rangle}$. 
Write $\frak{g}=\frak{l} \oplus \frak{p}$ the eigenvector space decomposition associated to $\theta$ ($\frak{l}$ for the eigenvalue $1$). Let $K$ be the compact subgroup defined as the connected subgroup whose Lie algebra $\frak{l}$ is the set of fixed points of $\theta$. Fix $\frak{a}\subset \frak{p}$ a maximal abelian subalgebra of $\frak{p}$. Consider the root system $\Sigma$ associated to $\frak{a}$ and let $\Sigma^{+}$ be the set of positive roots, and define the corresponding positive Weyl chamber as
$$\frak{a}^{+}:=\left\{H\in \frak{a} \mbox{ such that } \alpha(H)>0, \forall \alpha \in \Sigma^{+}\right\}.$$

Let $A^{+}=\overline{\exp( \frak{a^{+})}}$ be the closure of $\exp(\frak{a^{+}})$.  Consider the corresponding Cartan decomposition $KA^{+}K$, with the Cartan projection $H:g\in G\mapsto H(g)\in \overline{\frak{a^{+}}}$. 
From now on let $L$ be the length function $$L(g)=L(ke^{H(g)}k'):=|H(g)|$$  where $g=k e^{H(g)} k'$ with $e^{H(g)}\in A^{+}$ and $k,k'\in K$. Notice that $L$ is $K$ bi-invariant.
 
Let $(G/P,\nu)$ be the Furstenberg boundary  where $P$ denotes a minimal parabolic subgroup of $G$ and $\nu$ denotes the unique $K$-invariant probability measure which is $G$-quasi-invariant. Then consider the Harish-Chandra function \begin{equation*}\label{HCH1}
\Xi:g\in G \mapsto \int_{G/P}\bigg(\frac{{\rm d}g_{*}\nu}{{\rm d}{\nu}}\bigg)^{\frac{1}{2}} (b){\rm d}\nu(b)\in \mathbb{R}^{+}.
\end{equation*}
 Define now the Harish-Chandra--Schwartz norm associated with a subgroup $H$ of $G$ as

$$\|f\|_{S^{d}_{L}(H)}:=\sup_{g\in H} \frac{|f(g)|(1+L(g))^{d}}{\Xi(g)},$$ where $f$ is in $C_{c}(H)$.
The Harish-Chandra--Schwartz space associated with $H$ denoted by $S^{d}_{L}(H)$ is defined as the completion of $C_{c}(H)$ with respect to the norm $\|\cdot \|_{S^{d}_{L}(H)}$.\\
It is a well known fact that for $G$ there exist $C,d>0$ so that for all $f$ in $C_{c}(G)$ the following inequality holds:
\begin{equation}\label{SD}
\|\lambda_{G}(f)\|_{op}\leq C \|f\|_{S^{d}_L(G)},
\end{equation}
 and we refer to \cite[Chapitre 4]{La} for proofs. 
The aim of this note is to give a proof of the following result.

\begin{theorem} \label{maintheo}
Let  $\Gamma$ be a discrete group of a connected real semisimple Lie group. Then there exist $d>0$ and $C>0$ such that

\begin{enumerate}
\item $S^{d}_{L}(\Gamma)$ is a Banach convolution algebra.
\item   $ \|\lambda_{\Gamma}(f)\|_{op}\leq C \|f\|_{S^{d}_{L}(\Gamma)} $ for all $f\in S^{d}_{L}(\Gamma)$.
\end{enumerate}
In particular $S^{d}_{L}(\Gamma)$ is a dense subalgebra of $C^{*}_{r}(\Gamma)$.
\end{theorem}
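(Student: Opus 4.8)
The plan is to derive both conclusions from two harmonic-analytic inputs on the Harish-Chandra function $\Xi$, and to feed them into the known inequality \eqref{SD} for the ambient group $G$. The first input is a \emph{sub-convolutivity estimate}
\[
\Xi(st) \geq c\,(1+L(s)+L(t))^{-D}\,\Xi(s)\,\Xi(t), \qquad s,t\in G,
\]
for suitable constants $c,D>0$. I would obtain it from the sharp Harish-Chandra--Anker estimate $\Xi(g)\asymp e^{-\rho(H(g))}\prod_{\alpha\in\Sigma^+}(1+\alpha(H(g)))$ (available through \cite{Boy1}), combined with the triangle inequality $L(st)\le L(s)+L(t)$ and the subadditivity of $g\mapsto\rho(H(g))$, the latter reflecting submultiplicativity of operator norms in finite-dimensional representations. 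The second input is the \emph{master summability}
\[
\sum_{\gamma\in\Gamma}\frac{\Xi(\gamma)^2}{(1+L(\gamma))^{2d}}<\infty \qquad (d\text{ large}),
\]
which I would prove robustly from properness and discreteness: a disjoint packing $\{\gamma V\}_{\gamma\in\Gamma}$ by translates of a fixed small neighbourhood of $e$ gives $\#\{\gamma:\,n\le L(\gamma)<n+1\}\le C\,\mathrm{vol}(B_{n+1}\setminus B_{n-1})\lesssim e^{2\rho(H)\,n}\,\mathrm{poly}(n)$, and this exponential growth is exactly matched against $\Xi(\gamma)^2\lesssim e^{-2\rho(H)L(\gamma)}\,\mathrm{poly}(L(\gamma))$, leaving a convergent polynomial sum. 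I stress that it is the \emph{square} $\Xi^2$ that is summable while the single sum $\sum_\gamma\Xi(\gamma)(1+L(\gamma))^{-d}$ diverges; this imbalance (growth $e^{2\rho}$ versus decay $e^{-\rho}$) is the reason the inequality is model-free and genuinely weaker than property RD.

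For statement (1) I would estimate convolution pointwise. Inserting the two Schwartz bounds into $(f*g)(u)=\sum_{s}f(s)g(s^{-1}u)$ reduces the Banach-algebra inequality $\|f*g\|_{S^d_L(\Gamma)}\le C\|f\|_{S^d_L(\Gamma)}\|g\|_{S^d_L(\Gamma)}$ to
\[
\sum_{s\in\Gamma}\frac{\Xi(s)\,\Xi(s^{-1}u)}{(1+L(s))^d(1+L(s^{-1}u))^d}\;\le\; C\,\frac{\Xi(u)}{(1+L(u))^d}
\]
uniformly in $u$. Here I would use $(1+L(u))\le(1+L(s))(1+L(s^{-1}u))$ to move the length weights, control the two endpoint regions $s\approx e$ and $s\approx u$ by the sub-convolutivity estimate (where one $\Xi$-factor becomes $\Xi(u)$), and dispose of the remaining middle range by the master summability. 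Completeness of $S^d_L(\Gamma)$ is automatic, since it is defined as a completion.

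Statement (2) is the heart of the matter. I would first record the structural reduction: choosing $V=V^{-1}$ with $\{\gamma V\}$ disjoint and $VV^{-1}\cap\Gamma=\{e\}$ produces an isometry $T:\ell^2(\Gamma)\to L^2(G)$, $T\xi=\sum_\gamma\xi(\gamma)\lambda_G(\gamma)\chi$ with $\|\chi\|_2=1$, intertwining $\lambda_\Gamma$ with $\lambda_G|_\Gamma$, so that $\|\lambda_\Gamma(f)\|_{op}$ is governed by the convolution operator $\lambda_G(\mu_f)$ attached to the finitely supported measure $\mu_f=\sum_\gamma f(\gamma)\delta_\gamma$ on $G$. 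Using the domination $|f(\gamma)|\le\|f\|_{S^d_L(\Gamma)}\,\Xi(\gamma)(1+L(\gamma))^{-d}$ and a Haagerup-type Cauchy--Schwarz tailored to the sub-convolutive weight $\Xi(1+L)^{-d}$, the target bound collapses onto the master summability, exactly as in the proof of \eqref{SD} for $G$. The main obstacle, and the delicate point throughout, is that $\mu_f$ is a \emph{measure} and not a function: one must transfer \eqref{SD} to uniformly discrete supports without losing the operator-norm lower bound (smoothing to a genuine $C_c(G)$-function tends to decrease the norm, and renormalising to an approximate identity makes the Schwartz norm blow up). Equivalently, the Cauchy--Schwarz must be arranged so that the convergent $\Xi^2$ — rather than the divergent $\Xi$ — governs the estimate; this is precisely where discreteness (disjoint packing) and the harmonic analysis behind \eqref{SD} have to be combined.

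Finally the last assertion follows formally. The norm $\|\cdot\|_{S^d_L(\Gamma)}$ dominates the weighted supremum of $|f(\gamma)|$, so elements of the completion $S^d_L(\Gamma)$ are honest functions on $\Gamma$ and $C_c(\Gamma)$ is dense in $S^d_L(\Gamma)$ by construction. By (1) it is a Banach algebra, and by (2) the inclusion $C_c(\Gamma)\hookrightarrow C^*_r(\Gamma)$ is continuous for $\|\cdot\|_{S^d_L(\Gamma)}$ and extends to a continuous algebra homomorphism $S^d_L(\Gamma)\to C^*_r(\Gamma)$ with dense range. This map is injective, since $\lambda_\Gamma(f)=0$ forces $f=\lambda_\Gamma(f)\delta_e=0$; hence $S^d_L(\Gamma)$ is identified with a dense subalgebra of $C^*_r(\Gamma)$, as claimed.
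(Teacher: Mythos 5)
There is a genuine gap, and it is essentially the same one in both halves: your two analytic inputs --- pointwise sub-convolutivity of $\Xi$ and the square-summability $\sum_{\gamma}\Xi^{2}(\gamma)(1+L(\gamma))^{-2d}<\infty$ --- are not strong enough to close the estimates, because a discrete subgroup can grow like $e^{2\rho(H)}$ while $\Xi$ only decays like $e^{-\rho(H)}$. Concretely, your reduction of item (1) is the same as the paper's and leads to the key bound
\[
\sum_{s\in\Gamma}\frac{\Xi(s)}{(1+L(s))^{2d}}\,\Xi(s^{-1}u)\;\le\;C\,\Xi(u)\qquad\text{uniformly in }u,
\]
but neither of your tools proves it: replacing $\Xi(s)\Xi(s^{-1}u)$ by $c^{-1}(1+L(s)+L(s^{-1}u))^{D}\Xi(u)$ via sub-convolutivity leaves the sum $\sum_{s}(1+L(s))^{-2d}$, which diverges for every $d$ by exponential growth; and any Cauchy--Schwarz designed to bring in the convergent square sum produces the companion factor $\sum_{s}\Xi(s^{-1}u)^{2}=\sum_{s}\Xi(s)^{2}$, which is infinite (for lattices, say), while the single sum $\sum_{s}\Xi(s)$ also diverges, as you yourself observe. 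The ingredient that closes this sum in the paper is representation-theoretic and absent from your proposal: one writes $\Xi(s^{-1}u)=\langle \pi_{\nu}(u)\textbf{1}_{G/P},\pi_{\nu}(s)\textbf{1}_{G/P}\rangle$ and uses the exact spherical-function identity of Proposition \ref{radialRD}, $\int_{G}f(s)\langle\pi_{\nu}(s)\xi,\eta\rangle\,{\rm d}s=\langle\xi,\textbf{1}_{G/P}\rangle\langle\textbf{1}_{G/P},\eta\rangle\langle f,\Xi\rangle_{L^{2}(G)}$ for bi-$K$-invariant $f$, discretized to $\Gamma$ by the stability/packing argument (Lemma \ref{stable}, Proposition \ref{discret}); the factor $\Xi(u)$ appears because $\|\pi_{\nu}(u)\textbf{1}_{G/P}\|_{1}=\Xi(u)$ exactly, not from any pointwise inequality on $\Xi$. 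The same directional issue undermines your shell-counting proof of the master summability in higher rank: an $L$-shell has cardinality of order $e^{2n\max_{|H|=1}\rho(H)}$ while $\Xi^{2}(\gamma)$ decays like $e^{-2\rho(H(\gamma))}$ with $H(\gamma)$ possibly near a wall, so ``count times worst-case decay'' over a shell diverges; one must localize in direction inside $\frak{a}^{+}$, which is again the stability argument.

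For item (2) you have honestly flagged, but not resolved, the decisive step: after embedding $\ell^{2}(\Gamma)$ into $L^{2}(G)$ you would need to apply Inequality (\ref{SD}) to the discrete measure $\mu_{f}$, and you acknowledge that no smoothing procedure achieves this without losing either the operator norm or the Schwartz norm. The paper sidesteps $\lambda_{G}$ entirely: by Herz's majoration principle in the form of Shalom's Lemma \ref{shalom}, $\|\lambda_{\Gamma}(f)\|\le\|\pi_{\nu}(f)\|$ for positive $f$, and $\|\pi_{\nu}(f)\|$ is then bounded directly on the boundary representation using the pointwise Cauchy--Schwarz of Lemma \ref{CS} together with the same discretized spherical identity (\ref{discrete}). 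So the missing architecture is the boundary representation plus positivity, not a transfer to the regular representation of $G$. Your final paragraph, deducing that $S^{d}_{L}(\Gamma)$ is a dense subalgebra of $C^{*}_{r}(\Gamma)$ once (1) and (2) are granted, is correct.
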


Note that this algebra has already been studied in the context of hyperbolic groups and rank one semisimple Lie groups in \cite{Boy3}.

\subsection*{Property RD and Inequality (\ref{SD}) }
One of the fundamental properties of the Harish-Chandra function is the following integrability condition $$\int_{G}\frac{\Xi^{2}(g)}{(1+L(g))^{2d}}{\rm d}g <\infty,$$
for $d>0$ large enough depending on $G$, see Subsection \ref{HCH} for more details. Moreover, it is still true that for any discrete group $\Gamma$ in the ambient semisimple Lie group $G$ we have for all $d>0$ large enough  $$\sum_{\Gamma}\frac{\Xi^{2}(\gamma)}{(1+L(\gamma))^{2d}} <\infty,$$
see \cite[Proposition 3.1]{Boy2} for further details.
 In the context of semisimple Lie groups it is clear that there exist $d,d'$ and $C>0$ such that $\|f\|_{H_{L}^{d}} \leq C  \|f\|_{S^{d'}_{L}}$ for either $f$ in $C_{c}(G)$ or in $C_{c}(\Gamma)$ of a discrete subgroup $\Gamma$ of $G$. Hence  Inequality (\ref{RD}), namely Property RD, implies Inequality (\ref{SD}) for $G$ and its discrete subgroups. The converse is wrong since all non-uniform lattices satisfy Inequality (\ref{SD}) whereas they cannot satisfy property RD.
\\

\subsection*{The Baum-Connes conjecture and the work of V. Lafforgue}
We freely rely on \cite{V}  where the reader could consult for further details about property RD, its connection with K-theory and the Baum-Connes conjecture. We refer also to the Bourbaki seminar of Skandalis \cite{Sk} for a summary of the work of Lafforgue.\\

Let $\Gamma$ be a countable group. The Baum-Connes conjecture identifies two objects associated with $\Gamma$, one analytical and one geometrical/topological.
The right-hand side of the conjecture, or analytical side, involves
the K-theory of $C^{*}_{r}(\Gamma)$. The K-theory used here denoted by $K_{i}(C^{*}_{r}(\Gamma))$ for $i=0,1$; is the usual
topological K-theory for Banach algebras.
The left-hand side of the conjecture, or geometrical/topological
side, is the $\Gamma$-equivariant K-homology with $\Gamma$-
compact supports of the classifying space $\underline{E}\Gamma$ for proper actions of $\Gamma$.

The link between both sides of the conjecture is provided by the
analytic assembly map, or index map or Baum-Connes map which is a well-defined group homomorphism:
\begin{equation}
\mu^{\Gamma}_{i}: RK^{\Gamma}_{i}
(\underline{E}\Gamma)\rightarrow K^{i}(C^{*}_{r}(\Gamma)).
\end{equation}
 The definition of the assembly map can be traced back to \cite{Ka} and to \cite{BCH}. 
 
 \begin{conj} (The Baum-Connes conjecture)\\
For $i=0,1$ the assembly map $\mu^{\Gamma}_{i}$  is an isomorphism.
\end{conj}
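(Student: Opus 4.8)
The plan is to deduce the conjecture for a discrete subgroup $\Gamma$ of $G$ from Theorem \ref{maintheo} by feeding the Harish-Chandra--Schwartz algebra $\mathcal{A}:=S^{d}_{L}(\Gamma)$ into Lafforgue's Banach $KK$-theory, in the spirit of the surveys \cite{V} and \cite{Sk}. First I would record that $\mathcal{A}$ is an \emph{unconditional} completion of the group algebra $\mathbb{C}[\Gamma]$: the norm $\|f\|_{S^{d}_{L}(\Gamma)}=\sup_{\gamma\in\Gamma}|f(\gamma)|(1+L(\gamma))^{d}/\Xi(\gamma)$ depends only on $|f|$, and by Theorem \ref{maintheo} it is a Banach convolution algebra sitting densely and continuously inside $C^{*}_{r}(\Gamma)$ through Inequality (\ref{SD}). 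Lafforgue's formalism then attaches to $\mathcal{A}$ an assembly map $\mu^{\mathcal{A}}_{i}: RK^{\Gamma}_{i}(\underline{E}\Gamma)\rightarrow K_{i}(\mathcal{A})$ which factors the reduced assembly map $\mu^{\Gamma}_{i}$ through the homomorphism $K_{i}(\mathcal{A})\rightarrow K_{i}(C^{*}_{r}(\Gamma))$ induced by the inclusion.

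The argument then splits into two tasks. The first is to show that the inclusion $\mathcal{A}\hookrightarrow C^{*}_{r}(\Gamma)$ induces an isomorphism in K-theory; since it is continuous with dense range, it suffices to prove that $\mathcal{A}$ is stable under holomorphic functional calculus. Here I would exploit the submultiplicativity furnished by Theorem \ref{maintheo}(1) together with the summability $\sum_{\gamma\in\Gamma}\Xi^{2}(\gamma)(1+L(\gamma))^{-2d}<\infty$ recalled from \cite[Proposition 3.1]{Boy2} in order to invert $1-a$ inside the smaller algebra whenever it is invertible in $C^{*}_{r}(\Gamma)$, following the derivation/Sobolev arguments of \cite{J2} and \cite{La2}. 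The second, and decisive, task is to prove that $\mu^{\mathcal{A}}_{i}$ is itself an isomorphism. This rests on Kasparov's $\gamma$-element attached to the proper isometric action of $\Gamma$ on the Riemannian symmetric space $G/K$, together with the identity $\gamma=1$ realized not in ordinary but in Banach $KK$-theory.

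The crux, and the step I expect to be the main obstacle, is precisely this Banach homotopy $\gamma=1$. For higher-rank $G$ the group $\Gamma$ typically has property (T), which obstructs $\gamma=1$ in $C^{*}$-algebraic $KK$-theory; Lafforgue's resolution is to realize the dual-Dirac class by families of representations of controlled exponential growth, and the role of Theorem \ref{maintheo}(2) is that Inequality (\ref{SD}) makes exactly these representations bounded on $\mathcal{A}$, so that the deformation becomes admissible in the Banach category. A further difficulty, absent from the cocompact setting of \cite{La}, is that a general discrete subgroup need not be a cocompact lattice: the action on $G/K$ is proper but its quotient may be non-compact, so one cannot reduce to a compactly supported model and must instead verify that Lafforgue's $\gamma=1$ construction, and the comparison with $\underline{E}\Gamma$, survive without cocompactness. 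This is exactly why the \emph{weak} estimate (\ref{SD}), which by Theorem \ref{maintheo} is available for \emph{every} discrete subgroup, including non-uniform lattices that fail property RD, is the natural input: it holds in precisely the generality needed to run the Banach $\gamma$-argument beyond the property RD world.
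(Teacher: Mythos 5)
You have set out to prove the statement itself, but the paper offers no proof to compare against: this is the Baum--Connes conjecture, stated in the paper as an open conjecture, and your proposal does not close it. The part of your outline that is sound is exactly what the paper actually establishes. Since $\|f\|_{S^{d}_{L}(\Gamma)}$ depends only on $|f|$, and by Theorem \ref{maintheo}(1) the space $S^{d}_{L}(\Gamma)$ is a Banach convolution algebra, it is an unconditional completion of $\mathbb{C}\Gamma$; Lafforgue's Theorem \ref{La1} then gives that $\mu^{S^{d}_{L}(\Gamma)}_{i}$ is an isomorphism (this is the paper's Corollary), and Inequality (\ref{SD}), i.e.\ Theorem \ref{maintheo}(2), supplies the continuous dense embedding $S^{d}_{L}(\Gamma)\hookrightarrow C^{*}_{r}(\Gamma)$ through which $\mu^{\Gamma}_{i}$ factors. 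Everything beyond that point in your argument is a gap.

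The decisive failure is your first task: $S^{d}_{L}(\Gamma)$ is \emph{not} stable under holomorphic functional calculus, a fact the paper records explicitly (crediting V.~Lafforgue). The norm weights $|f(\gamma)|$ by $(1+L(\gamma))^{d}/\Xi(\gamma)$, and $1/\Xi$ grows exponentially in $L$; consequently a Dirac mass $\delta_{\gamma}$, which is a unitary of spectral radius $1$ in $C^{*}_{r}(\Gamma)$, can have a different spectral radius in $S^{d}_{L}(\Gamma)$, since $\|\delta_{\gamma^{n}}\|_{S^{d}_{L}(\Gamma)}=(1+L(\gamma^{n}))^{d}/\Xi(\gamma^{n})$. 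The derivation/Sobolev arguments you cite from \cite{J2} and \cite{La2} are tailored to the weighted-$\ell^{2}$ algebras $H^{d}_{L}(\Gamma)$ under property RD; they do not transfer to this $\Xi$-weighted sup-norm algebra, and the summability $\sum_{\gamma}\Xi^{2}(\gamma)(1+L(\gamma))^{-2d}<\infty$ cannot repair this, because it controls norms, not spectra. Your second task is also misdirected: Theorem \ref{La1} applies to \emph{every} unconditional completion, with or without Inequality (\ref{SD}), so (\ref{SD}) plays no role in the Banach homotopy $\gamma=1$; its only function here is the embedding into $C^{*}_{r}(\Gamma)$. The genuinely open problem is whether $K_{i}(S^{d}_{L}(\Gamma))\rightarrow K_{i}(C^{*}_{r}(\Gamma))$ is an isomorphism, and the paper points out that in the cases where this is known (e.g.\ uniform lattices in $SL(3,\mathbb{R})$, via Lafforgue's work on $H^{d}_{L}(\Gamma)$) the statement is \emph{equivalent} to Baum--Connes, not a route to it. In short, your proposal reduces the conjecture to two claims, one of which is refuted in the paper and the other of which is the conjecture in disguise.
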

In the important paper \cite{La} of Lafforgue, containing breakthrough ideas about the Baum-Connes conjecture, a Banach algebra $\mathcal{A}\Gamma$ is an \emph{unconditional completion} of the group algebra $\mathbb{C}\Gamma$  if it contains $\mathbb{C}\Gamma$ as a dense subalgebra and if, for $f_{1},f_{2}$ in $\mathbb{C}\Gamma$ such that $|f_{1}(\gamma)|\leq |f_{2}(\gamma)|$ for all $\gamma$ in $\Gamma$, we have $\|f_{1}\|_{\mathcal{A} \Gamma}\leq \|f_{2}\|_{\mathcal{A}\Gamma}$. If $\mathcal{A}\Gamma$ is an unconditional completion one may define an assembly map

\begin{equation}
\mu^{\mathcal{A} \Gamma}_{i}: RK^{\Gamma}_{i}
(\underline{E}\Gamma)\rightarrow K^{i}(\mathcal{A} \Gamma).
\end{equation}
In the proof of Lafforgue of the Baum-Connes conjecture for discrete groups having property (T), one of the fundamental step is the following: 
\begin{theorem}\label{La1}(V. Lafforgue)\\
If $\Gamma$ is a group acting properly isometrically
either on a simply connected complete Riemannian manifold with non-positive curvature, bounded from below, or on a Euclidean
building then for $i=0,1$ the map $\mu^{\mathcal{A}\Gamma }_{i}: RK^{\Gamma}_{i}
(\underline{E}\Gamma)\rightarrow K^{i}(\mathcal{A}\Gamma )$ is an isomorphism.

\end{theorem}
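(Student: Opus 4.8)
The plan is to run Lafforgue's Dirac--dual-Dirac argument entirely inside his equivariant \emph{Banach} bivariant $K$-theory $KK^{\mathrm{ban}}_{\Gamma}$ rather than inside Kasparov's $C^{*}$-theory. Write $X$ for the space acted upon---the simply connected nonpositively curved manifold of curvature bounded below, or the Euclidean building. Because $X$ is contractible and the $\Gamma$-action is proper and isometric, $X$ is a model for $\underline{E}\Gamma$, so that $RK^{\Gamma}_{i}(\underline{E}\Gamma)=RK^{\Gamma}_{i}(X)$ and $\mu^{\mathcal{A}\Gamma}_{i}$ is computed by the index map attached to a Dirac-type operator on $X$. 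First I would construct in $KK^{\mathrm{ban}}_{\Gamma}$ a Dirac class $\alpha\in KK^{\mathrm{ban}}_{\Gamma}(C_{0}(X),\mathbb{C})$ (Kasparov's Clifford-linear Dirac operator in the Riemannian case, its simplicial analogue on the building in the combinatorial case) together with a dual-Dirac class $\beta\in KK^{\mathrm{ban}}_{\Gamma}(\mathbb{C},C_{0}(X))$ obtained from the geodesic (resp. conical) retraction of $X$ onto a basepoint $x_{0}$, and then set $\gamma:=\beta\otimes_{C_{0}(X)}\alpha\in KK^{\mathrm{ban}}_{\Gamma}(\mathbb{C},\mathbb{C})$.

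Next I would feed these classes through Lafforgue's Banach descent functor, which to an element of $KK^{\mathrm{ban}}_{\Gamma}$ associates an operator on $K_{*}(\mathcal{A}\Gamma)$ compatibly with $\mu^{\mathcal{A}\Gamma}_{i}$. The formal skeleton of the proof is then the standard one: $\alpha$ induces the map whose invertibility is the content of Baum--Connes, $\beta$ produces a candidate inverse, and a diagram chase shows that $\mu^{\mathcal{A}\Gamma}_{i}$ is simultaneously injective and surjective as soon as the composite $\gamma$ equals the unit $1\in KK^{\mathrm{ban}}_{\Gamma}(\mathbb{C},\mathbb{C})$. The constructions of $\alpha$ and $\beta$, and the verification that descent intertwines them with the assembly map, are essentially transcriptions of Kasparov's work into the Banach framework and pose no essential difficulty; the whole weight of the theorem is carried by the equality $\gamma=1$.

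The decisive and genuinely hard step is therefore to prove $\gamma=1$ in $KK^{\mathrm{ban}}_{\Gamma}(\mathbb{C},\mathbb{C})$. In Kasparov's $C^{*}$-theory this equality fails for the higher-rank and property (T) groups covered by the statement: the natural homotopy contracting $\gamma$ to $1$---rescaling the Dirac operator while sliding the dual-Dirac construction along the retraction $c_{s}:X\to\{x_{0}\}$---passes through a field of representations of $\Gamma$ whose $L^{2}$-norms are governed by the spectral gap, and property (T) is precisely the obstruction preventing this field from being norm-continuous. The point of passing to an \emph{unconditional} completion is that such a completion depends only on the absolute values $|f(\gamma)|$ and its natural coefficient modules are of $\ell^{1}$-type rather than $L^{2}$-type, so that the $L^{2}$ spectral-gap obstruction is invisible to it. Concretely, I would realize the contraction homotopy on a continuous field of unconditional coefficient modules over $X$, and the essential estimate is that the Dirac and dual-Dirac operators along this field remain \emph{uniformly} bounded in the unconditional norm; this uniform bound is exactly what the nonpositive curvature (convexity of the distance function, control of the geodesic retraction) in the Riemannian case, and the $\mathrm{CAT}(0)$/conical structure in the building case, are there to supply. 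Granting it, the homotopy is norm-continuous in $KK^{\mathrm{ban}}_{\Gamma}$ and contracts $\gamma$ to $1$.

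I expect the main obstacle to be precisely this uniform boundedness of the homotopy along the field of unconditional modules. One must show that the operators interpolating between $\gamma$ and $1$---built from the Clifford action, the Dirac operator, and the geometric contraction---define a genuine morphism in $KK^{\mathrm{ban}}_{\Gamma}$: that they are adjointable, continuous in the parameter, and bounded independently both of it and of the group element, using only the $|f(\gamma)|$-dependence an unconditional completion provides. In the Riemannian case this rests on comparing the geodesic retraction with the Cartan $KA^{+}K$ decomposition and exploiting convexity; in the Euclidean-building case the smooth Dirac operator and geodesic flow must first be replaced by a simplicial Dirac-type operator and the conical contraction, after which the same principle applies. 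Assembling these estimates into a single continuous field, and checking that Banach descent carries the resulting equality $\gamma=1$ to the statement that $\mu^{\mathcal{A}\Gamma}_{i}$ is an isomorphism, is the technical heart of the argument.
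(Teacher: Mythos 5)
You should first note the standing of this statement in the paper: it is not proved there at all. It is quoted, with attribution, as a theorem of V.~Lafforgue from \cite{La} (see also Skandalis's Bourbaki report \cite{Sk}) and is used purely as a black box to deduce the corollary for the completions $S^{d}_{L}(\Gamma)$. So there is no ``paper's own proof'' to compare against; the only meaningful benchmark is Lafforgue's published argument, and against that benchmark your proposal reproduces the correct architecture --- Dirac class $\alpha$, dual-Dirac class $\beta$, $\gamma=\beta\otimes_{C_{0}(X)}\alpha$, a Banach descent functor compatible with the assembly map, and the reduction of everything to showing that $\gamma$ acts as the identity.

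The genuine gap is in the one step you yourself identify as carrying the whole weight of the theorem. Your proposed mechanism for $\gamma=1$ --- that an unconditional completion has ``$\ell^{1}$-type'' coefficient modules on which the $L^{2}$ spectral-gap obstruction is ``invisible,'' so that the same rescaling/retraction homotopy goes through --- is not Lafforgue's mechanism and, as described, would fail: the homotopy you describe is exactly Kasparov's, and for the higher-rank property (T) groups covered by the statement it is obstructed no matter how you norm the modules, because the obstruction lives in the representations of $\Gamma$ the homotopy passes through, not in the completion of the group algebra. The new idea in \cite{La}, absent from your sketch, is to enlarge the ambient bivariant theory so that the homotopy may pass through \emph{non-unitary} representations on Banach spaces whose operator norms grow at most like $e^{\varepsilon L(g)}$ for arbitrarily small $\varepsilon>0$; property (T), an $L^{2}$-isometric phenomenon, does not obstruct such families, and the nonpositive curvature (with the lower curvature bound supplying bounded geometry) is what yields the delicate uniform estimates proving $\gamma=1$ in this enlarged theory --- this occupies a substantial part of Lafforgue's paper. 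Unconditionality then enters only at the descent stage, to ensure the weakened equality $\gamma=1$ still computes $K_{i}(\mathcal{A}\Gamma)$ for every unconditional completion. Two further points you assume without comment are themselves nontrivial inputs: the existence of the $\gamma$ element for Euclidean buildings (Kasparov--Skandalis) and the compatibility of Banach descent with $\mu^{\mathcal{A}\Gamma}_{i}$. In short: right skeleton, but the decisive step is asserted via a heuristic that does not survive contact with the known counter-phenomena, and the actual proof requires an idea your plan does not contain.
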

Since the Harish-Chandra--Schwartz algebras are unconditional completions, we obtain as an immediate corollary:

\begin{coro}
If $\Gamma$ is a discrete subgroup of $G$ then for $i=0,1$ there exists $d>0$ such that the map $\mu^{S^{d}_{L}(\Gamma) }_{i}: RK^{\Gamma}_{i}
(\underline{E}\Gamma)\rightarrow K^{i}(S^{d}_{L}(\Gamma))$ is an isomorphism.
\end{coro}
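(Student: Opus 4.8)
The plan is to show that $S^{d}_{L}(\Gamma)$ is an unconditional completion of the group algebra $\mathbb{C}\Gamma$ and then to invoke Theorem \ref{La1}. The two defining requirements of an unconditional completion are straightforward to check. By Theorem \ref{maintheo}(1), for $d$ large enough $S^{d}_{L}(\Gamma)$ is a Banach convolution algebra, and by construction it is the completion of $C_{c}(\Gamma)=\mathbb{C}\Gamma$ with respect to $\|\cdot\|_{S^{d}_{L}(\Gamma)}$, so $\mathbb{C}\Gamma$ sits densely inside it. For the monotonicity condition, observe that the norm
$$\|f\|_{S^{d}_{L}(\Gamma)}=\sup_{\gamma\in\Gamma}\frac{|f(\gamma)|(1+L(\gamma))^{d}}{\Xi(\gamma)}$$
depends on $f$ only through the pointwise absolute values $|f(\gamma)|$. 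Hence if $f_{1},f_{2}\in\mathbb{C}\Gamma$ satisfy $|f_{1}(\gamma)|\le|f_{2}(\gamma)|$ for every $\gamma$, each term of the supremum defining $\|f_{1}\|_{S^{d}_{L}(\Gamma)}$ is dominated by the corresponding term for $f_{2}$, so $\|f_{1}\|_{S^{d}_{L}(\Gamma)}\le\|f_{2}\|_{S^{d}_{L}(\Gamma)}$. This is exactly the unconditionality required in Lafforgue's definition, so $S^{d}_{L}(\Gamma)$ is an unconditional completion of $\mathbb{C}\Gamma$.

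Next I would check that $\Gamma$ falls under the geometric hypotheses of Theorem \ref{La1}. Since $G$ is a connected real semisimple Lie group with finite center and $K$ is its maximal compact subgroup, the associated symmetric space $X=G/K$, equipped with the $G$-invariant Riemannian metric coming from the form $\langle\cdot,\cdot\rangle$, is a Riemannian symmetric space of non-compact type. Such a space is a simply connected complete Hadamard manifold of non-positive sectional curvature; since it is homogeneous and its curvature tensor is parallel, that tensor has constant norm and the sectional curvature is in particular bounded from below. The group $G$ acts by isometries on $X$, hence so does the subgroup $\Gamma$. Because the point stabilizers in $X$ are conjugates of the compact group $K$, the discrete subgroup $\Gamma$ intersects each stabilizer in a finite set, so the $\Gamma$-action on $X$ is proper.

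With these two ingredients the conclusion is immediate. The group $\Gamma$ acts properly isometrically on the simply connected complete Riemannian manifold $X$ of non-positive curvature bounded from below, and $S^{d}_{L}(\Gamma)$ is an unconditional completion of $\mathbb{C}\Gamma$. Applying Theorem \ref{La1} with $\mathcal{A}\Gamma=S^{d}_{L}(\Gamma)$ yields that the assembly map
$$\mu^{S^{d}_{L}(\Gamma)}_{i}:RK^{\Gamma}_{i}(\underline{E}\Gamma)\rightarrow K^{i}(S^{d}_{L}(\Gamma))$$
is an isomorphism for $i=0,1$, which is the assertion of the corollary.

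I expect no serious obstacle in this argument, since the analytic content has already been absorbed into Theorem \ref{maintheo} and the deep topological input is Lafforgue's Theorem \ref{La1}. The only points that genuinely need care are the three geometric properties of $X=G/K$ — completeness and simple connectedness, the two-sided bound on the curvature, and properness of the $\Gamma$-action — all of which are standard facts about symmetric spaces of non-compact type and the behaviour of discrete subgroups acting on $G/K$.
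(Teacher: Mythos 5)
Your proposal is correct and follows exactly the route the paper intends: the paper derives this corollary in one line ("Since the Harish-Chandra--Schwartz algebras are unconditional completions, we obtain as an immediate corollary"), relying on Theorem \ref{maintheo}(1) for the Banach algebra structure, the manifest monotonicity of the sup-norm, and Lafforgue's Theorem \ref{La1} applied to the proper isometric action of $\Gamma$ on the symmetric space $G/K$. You have merely written out the details the paper leaves implicit.
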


After having observed that the assembly maps $\mu^{\Gamma}_{i}$ and $\mu^{\mathcal{A}\Gamma}_{i}$ are compatible, it remains to determine when one can find an embedding $\mathcal{A}\Gamma \hookrightarrow C^{*}_{r}(\Gamma)$ that induces an isomorphism of $K$-theory of $K_{i}(\mathcal{A}\Gamma)$ and $K_{i}(C^{*}_{r}(\Gamma))$. It turns out that if a discrete group $\Gamma$ satisfies property RD the inclusion of the unconditionnal algebra $H^{d}_{L}(\Gamma)$ for $d$ large enough induces isomorphism in topological K-theory and enables Lafforgue to prove the Baum-Connes conjecture for discrete group having property RD, as the cocompact lattices in rank one semisimple Lie group and in $SL_{3}(k)$ where $k$ is a local field.\\
Unfortunately, the Harish-Chandra--Schwartz algebra is not stable by holomorphic functional calculus and does not automatically induce isomorphism in topological K-theory. Consider an element $\gamma$ in $\Gamma$ so that  $\lim L(\gamma^{n})/n \rightarrow 0$. It is clear that the unit Dirac mass centered at $\gamma$ does not have the same spectral radius in $C_{r}^{*}(\Gamma)$ and in $S_{L}^{d}(\Gamma)$. \\
Nevertheless, it might be true that the embedding $S^{d}_{L}(\Gamma)\hookrightarrow C^{*}_{r}(\Gamma)$, for $d$ large enough, induces isomorphism of $K$-theory of $K_{i}(S^{d}_{L}(\Gamma))$ and $K_{i}(C^{*}_{r}(\Gamma))$. It is probably worth noting that in some cases it is true (e.g. uniform lattices in $SL(3,\mathbb{R})$), thanks to Lafforgue's work, and this is equivalent to the Baum-Connes conjecture.

 \subsection*{Structure of the paper}
 In Section \ref{sec1} we recall some standard facts about the quasi-regular representation, the Harish-Chandra function and we state a lemma due to Shalom. In Section \ref{sec2} we prove several elementary inequalities. Finally in Section \ref{sec3} we give the proof of Theorem \ref{maintheo}. 
\subsection*{Acknowledgements}
 I am grateful to Nigel Higson  who suggested that I investigate the structure of convolution algebra of Harish-Chandra--Schwartz's spaces of discrete groups and for helpful discussions concerning the work of Vincent Lafforgue and the Baum-Connes conjecture. I would like to thank Vincent Lafforgue for fruitful discussions and for pointing out that these algebras are not stable by holomorphic functional calculus. Finally, I thank Vladimir Finkelshtein for helpful comments on this paper.

\section{Preliminaries: Quasi-regular representation, the Harish-Chandra function and Shalom's lemma}\label{sec1}

\subsection{Quasi-regular representation}\label{qr}
A connected semisimple real Lie group with finite center can be written $G=KP$, where $K$ is a compact connected subgroup and $P$ is a minimal parabolic subgroup. We denote by $\Delta_{P}$ the right-modular function of $P$. The map $\Delta_{P}$ extends to $\Delta : G \rightarrow \mathbb{R^{*}_{+}}$ by $\Delta(g)=\Delta(kp):=\Delta_{P}(p)$. It is well defined because $K\cap P$ is compact (observe that ${\Delta_{P}}_{|_{K\cap P}}=1$). The quotient $G/P$, called the Furstenberg boundary, carries a unique quasi-invariant probability measure $\nu$ which is $K$-invariant, such that the Radon-Nikodym derivative at $(g,b)\in G \times G/P$ denoted by  $$c(g,b)=\frac{{\rm d}g_{*}\nu}{{\rm d}\nu}(b)=\frac{\Delta(gb)}{\Delta(b)},$$ where $g_{*}\nu(A)=\nu(g^{-1}A)$ (notice that for all $g\in G$, the function $b \in G/P \mapsto  \frac{\Delta(gb)}{\Delta(b)} \in \mathbb{R^{*}_{+}}$ is well defined). We refer to \cite[Appendix B, Lemma B.1.3, p.~344-345] {B} for more details. Let $L^{2}(G/P,\nu)$ the Hilbert space endowed with its inner product denoted by $\langle\cdot,\cdot \rangle$ and consider the quasi-regular representation  $\pi_{\nu}:G \rightarrow \mathcal{U}(L^{2}(G/P,\nu))$ associated to $P$, defined by 
\begin{equation}
(\pi_{\nu}(g)\xi)(b)=c(g,b)^{\frac{1}{2}}\xi(g^{-1}b).
\end{equation}
 Denote by ${\rm d}k$ the Haar measure on $K$, and under the identification $G/P=K/(K\cap P)$, denote by ${\rm d}[k]$ the measure $\nu$ on $G/P$. \\
 Let $\Gamma$ be a discrete subgroup of a semisimple Lie group $G$.
 In this note we consider the following unitary representation of $\Gamma$ 
 \begin{equation}
 \pi_{\nu}: \gamma \in \Gamma \mapsto \pi_{\nu}(\gamma)\in \mathcal{U}(L^{2}(G/P,\nu)),
 \end{equation}
 which is nothing but the restriction of the quasi-regular representation $\pi_{\nu}$ of $G$ to $\Gamma$.\\
Observe that if $L_{+}^{2}(G/P)$ denotes the cone of positive functions observe that $\pi_{\nu}(g)L_{+}^{2}(G/P)\subset L_{+}^{2}(G/P)$ for all $g$ in $G$.

\subsection{The Harish-Chandra function}\label{HCH}
 The well-known Harish-Chandra function can also be defined as 
 \begin{equation}
 \Xi:g\in G \mapsto \left\langle \pi_{\nu}(g)\textbf{1}_{G/P},\textbf{1}_{G/P}\right\rangle\in \mathbb{R}^{+},
 \end{equation}
  where $\textbf{1}_{G/P}$ denotes the characteristic function of the space $G/P$.\\
Observe that $\Xi$ is bi-$K$-invariant and so $\Xi(g)=\Xi(e^{H(g)})$ and $\Xi(g)=\Xi(g^{-1})$ for all $g\in G$.\\
Let $r$ be the number of indivisible positive roots in $\frak{a}$. We know that there exists $C>0$ such that for all $H \in \frak{a}$ where $e^{H} \in A^{+}$ we have $$\Xi(e^{H})\leq Ce^{-\rho(H)}\left(1+L(e^{H})\right)^{r},$$ with $\rho=\frac{1}{2}\sum_{\alpha\in \Sigma^{+}}n_{\alpha} \alpha \in \frak{a}^{+}$, see  \cite[Chap 4, Theorem 4.6.4, p.161]{GV}. Hence for $2d> \dim(\frak{a})+2r$, we set\begin{equation}\label{Cd}
\mathcal{C}_{d}:=\int_{\frak{a}^{+}} \frac{ \Xi^{2}(e^{H})}{(1+L(e^{H}))^{2d}} J(H){\rm d}H<\infty,
\end{equation}

where $J$ denotes the density that occurs in the disintegration of the Haar measure on $G$ according to the Cartan decomposition. More precisely the Haar measure ${\rm d}g$ can be written ${\rm d}g={\rm d}kJ(H){\rm d}H{\rm d}k,$ where ${\rm d}k$ is the normalized Haar measure on $K$, ${\rm d}H$ the Lebesgue measure on $\frak{a}^{+}$, and $$J(H)=\prod_{\alpha\in \Sigma^{+}}\left(\frac{ e^{\alpha(H)}-e^{-\alpha(H)}}{2} \right)^{n_{\alpha}},$$ where $n_{\alpha}$ denotes the dimension of the root space associated to $\alpha$. See  \cite[Chap.V, section 5, Proposition 5.28, p.141-142]{K}, \cite[Chap. 2, \S 2.2, p.65]{GV}  and \cite[Chap 2, Proposition 2.4.6, p.73]{GV} for more details. 
\subsection{Shalom's Lemma}
The following lemma due to Shalom \cite[Lemma 2.3]{S} will be a tool to study convolution operator norm via  the Furstenberg boundary in the semisimple Lie groups.
\begin{lemma}\label{shalom}
Let $\pi:G \rightarrow \mathcal{U}(H)$ be a unitary representation of a locally compact group. Assume that there exists a non-zero positive vector $\xi$ in the following sense:
$\langle \pi(g)\xi,\xi \rangle\geq 0$ for all $g\in G$.
Then for any bounded positive measure $\mu$ on $G$ we have
$\|\lambda_{G}(\mu)\| \leq \|\pi(\mu) \|.$
\end{lemma}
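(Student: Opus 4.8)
The plan is to reduce the inequality to positive self-adjoint operators and then to exploit the positivity of $\xi$ through the spectral radius formula for the regular representation. First I would set $\tau := \mu^{*} * \mu$, where $\mu^{*}$ is the adjoint measure. Since $\mu$ is bounded and positive, $\tau$ is a bounded, positive and symmetric measure, and both $\lambda_{G}(\tau)$ and $\pi(\tau)$ are positive self-adjoint operators, with $\lambda_{G}(\tau) = \lambda_{G}(\mu)^{*}\lambda_{G}(\mu)$ and $\pi(\tau) = \pi(\mu)^{*}\pi(\mu)$. By the $C^{*}$-identity, $\|\lambda_{G}(\mu)\|^{2} = \|\lambda_{G}(\tau)\|$ and $\|\pi(\mu)\|^{2} = \|\pi(\tau)\|$, so it suffices to prove $\|\lambda_{G}(\tau)\| \le \|\pi(\tau)\|$.

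Since $\lambda_{G}(\tau)$ and $\pi(\tau)$ are positive and self-adjoint, their operator norms coincide with their spectral radii, so that $\|\lambda_{G}(\tau)\| = \lim_{n}\|\lambda_{G}(\tau^{*2n})\|^{1/2n}$ and $\|\pi(\tau)\| = \lim_{n}\|\pi(\tau^{*2n})\|^{1/2n}$, where $\tau^{*2n}$ denotes the $2n$-fold convolution power. The positivity of the vector now enters: writing $\phi(g) := \langle \pi(g)\xi, \xi\rangle \ge 0$, I would bound
\[
\|\pi(\tau^{*2n})\| \ge \frac{\langle \pi(\tau^{*2n})\xi, \xi\rangle}{\|\xi\|^{2}} = \frac{1}{\|\xi\|^{2}}\int_{G}\phi(g)\,{\rm d}\tau^{*2n}(g),
\]
the right-hand side being a genuine nonnegative quantity precisely because $\phi \ge 0$ and $\tau^{*2n} \ge 0$.

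To connect this with $\lambda_{G}$, note that $\phi$ is continuous with $\phi(e) = \|\xi\|^{2} > 0$, so there is a neighbourhood $U$ of $e$ and a constant $c > 0$ with $\phi \ge c$ on $U$; hence $\int_{G}\phi\,{\rm d}\tau^{*2n} \ge c\,\tau^{*2n}(U)$, and when $\Gamma$ is discrete one takes simply $U = \{e\}$. The final ingredient is the spectral radius formula for the regular representation --- Kesten's formula in the discrete case --- asserting that $\lim_{n}\big(\tau^{*2n}(U)\big)^{1/2n} = \|\lambda_{G}(\tau)\|$; this follows from the spectral theorem applied to the positive operator $\lambda_{G}(\tau)$, the point being that the spectral measure of $\lambda_{G}(\tau)$ along the normalised indicator of $U$ (along $\delta_{e}$ in the discrete case) attains the top of the spectrum. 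Taking $2n$-th roots and combining the three displays gives
\[
\|\pi(\tau)\| \ge \lim_{n}\Big(\int_{G}\phi\,{\rm d}\tau^{*2n}\Big)^{1/2n} \ge \lim_{n}\big(c\,\tau^{*2n}(U)\big)^{1/2n} = \|\lambda_{G}(\tau)\|,
\]
and the first-step reduction then yields $\|\lambda_{G}(\mu)\| \le \|\pi(\mu)\|$.

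The main obstacle is the spectral radius formula of the final step: for a discrete $\Gamma$ it is exactly Kesten's classical theorem, but for a general locally compact group one must work with a shrinking family of neighbourhoods $U$ of $e$ together with an approximation argument to guarantee that the limit recovers the full norm $\|\lambda_{G}(\tau)\|$ and not a strictly smaller value. Everything else is a routine manipulation of positive operators and positive measures, driven entirely by the single hypothesis $\langle \pi(g)\xi, \xi\rangle \ge 0$.
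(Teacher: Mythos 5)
The paper does not actually prove this lemma: it is quoted from Shalom's article (Lemma~2.3 there) and used as a black box, so there is no internal proof to compare against. Your argument is, in substance, Shalom's original one: pass to the positive symmetric measure $\tau=\mu^{*}*\mu$ and use the $C^{*}$-identity, bound $\|\pi(\tau)\|^{2n}\geq \langle\pi(\tau^{*2n})\xi,\xi\rangle=\int_{G}\phi\,{\rm d}\tau^{*2n}\geq c\,\tau^{*2n}(U)$ using $\phi\geq 0$ together with continuity of $\phi$ at $e$, and then recover $\|\lambda_{G}(\tau)\|$ from the return quantities $\tau^{*2n}(U)$. All of the reduction steps are correct, and the constants $c$ and $\|\xi\|^{2}$ disappear in the $2n$-th root limit as you intend.

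The one step you assert rather than prove is the Kesten-type formula $\lim_{n}(\tau^{*2n}(U))^{1/2n}=\|\lambda_{G}(\tau)\|$. For discrete $\Gamma$ with $U=\{e\}$ this is complete once you justify why the spectral measure of $\delta_{e}$ reaches the top of the spectrum: $\delta_{e}$ is cyclic for the right regular representation, which commutes with $\lambda_{\Gamma}(\tau)$, hence $\delta_{e}$ is separating for the von Neumann algebra generated by $\lambda_{\Gamma}(\tau)$ and no nonzero spectral projection annihilates it. Since the paper only ever applies the lemma to a discrete subgroup and a finitely supported positive $f$, this case suffices for its purposes. For general locally compact $G$, however, the statement with a \emph{fixed} neighbourhood $U$ is not justified as written: there is no a priori reason that the spectral measure of $|U|^{-1/2}\mathbf{1}_{U}$ sees the top of the spectrum. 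The repair is the one you gesture at: if $P$ is a nonzero spectral projection of $\lambda_{G}(\tau)$ near the top of the spectrum, then $P\rho(g)\mathbf{1}_{V}=\rho(g)P\mathbf{1}_{V}$ for the right regular representation $\rho$, and the right translates of indicators of a neighbourhood basis of $e$ span a dense subspace of $L^{2}(G)$, so $P\mathbf{1}_{V}\neq 0$ for some symmetric $V$ with $VV^{-1}\subseteq U$; then $\tau^{*2n}(U)\geq\langle\lambda_{G}(\tau)^{2n}u_{V},u_{V}\rangle$ with $u_{V}=|V|^{-1/2}\mathbf{1}_{V}$, because $\langle\lambda_{G}(g)u_{V},u_{V}\rangle\leq\mathbf{1}_{VV^{-1}}(g)$, and the power-mean inequality for the spectral measure of $u_{V}$ finishes the argument. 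With that paragraph supplied your proof is complete; as it stands it is a correct outline with the genuinely nontrivial point correctly identified but left open in the non-discrete case.
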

   It is obvious that $\mathbf{1}_{G/P}$ is a positive vector for $\pi_{\nu}$.\\

\section{Elementary inequalities}\label{sec2}
A function $f:G\rightarrow \mathbb{C}$ is called $L$-\emph{radial} if $f(g)=f(g')$ whenever $g$ and $g'$ satisfy $L(g)=L(g')$. 
Note that the space of $L$-radial compactly supported functions is identified by definition of $L$ to the space of compactly supported bi-K-invariant functions denoted by  $C_{c}(K \backslash G/K)$.\\
The following proposition is based on standard facts in representation theory. Nevertheless, we give a proof for the convenience of the reader. In the following, we denote by $\langle \cdot,\cdot\rangle_{L^{2}(G)}$ the scalar product of $L^{2}(G)$ with respect to the Haar measure.
 
\begin{prop}\label{radialRD}
For all $\xi,\eta$ in $L^{2}(G/P,\nu)$ and $f$ in $C_{c}(K\backslash G/K)$  we have
$$\int_{G}f(g)\langle \pi_{\nu}(g)\xi,\eta \rangle  {\rm d}g= \langle \xi,\textbf{1}_{G/P}\rangle \langle \textbf{1}_{G/P},\eta\rangle  \langle f,\Xi \rangle_{L^{2}(G)}.$$
In particular for all $d$ satisfying $2d>\dim(\frak{a})+2r$ we have for all $\xi,\eta$ in $L^{2}(G/P,\nu)$ and for all $L$-radial functions $f$ in $H^{d}_{L}( G)$
$$\bigg| \int_{G}f(g)\langle \pi_{\nu}(g)\xi,\eta \rangle  {\rm d}g  \bigg|\leq \mathcal{C}_{d}\|f\|_{H_{L}^{d}} \|\xi\|_{1}\|\eta\|_{1}.$$ 
\end{prop}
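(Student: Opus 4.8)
The strategy is to prove the displayed \emph{identity} first for $f\in C_{c}(K\backslash G/K)$ by a representation-theoretic averaging argument, then to read off the inequality from it by elementary estimates together with the integrability bound \eqref{Cd}, and finally to extend both statements to $L$-radial functions in $H^{d}_{L}(G)$ by density. First I would rewrite the left-hand side as $\langle \pi_{\nu}(f)\xi,\eta\rangle$, where $\pi_{\nu}(f)=\int_{G}f(g)\pi_{\nu}(g)\,{\rm d}g$ is the integrated operator (well defined and bounded since $f\in C_{c}(G)$). The decisive structural fact is that the line $\mathbb{C}\,\mathbf{1}_{G/P}$ is exactly the space of $K$-fixed vectors of $\pi_{\nu}$: as $G=KP$, the compact group $K$ acts transitively on $G/P=K/(K\cap P)$, and since $\nu$ is $K$-invariant the restriction $\pi_{\nu}|_{K}$ is the translation action, whose only invariant functions are the constants. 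Writing $P_{K}=\int_{K}\pi_{\nu}(k)\,{\rm d}k$ for the orthogonal projection onto this line, we have $P_{K}=\langle\,\cdot\,,\mathbf{1}_{G/P}\rangle\,\mathbf{1}_{G/P}$, using $\|\mathbf{1}_{G/P}\|^{2}=\nu(G/P)=1$.

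Next, using that $f$ is left- and right-$K$-invariant together with the unimodularity of $G$, I would check that $\pi_{\nu}(k)\pi_{\nu}(f)=\pi_{\nu}(f)=\pi_{\nu}(f)\pi_{\nu}(k)$ for every $k\in K$; averaging over $K$ then gives $P_{K}\pi_{\nu}(f)=\pi_{\nu}(f)=\pi_{\nu}(f)P_{K}$. Hence $\pi_{\nu}(f)=P_{K}\pi_{\nu}(f)P_{K}=\langle\pi_{\nu}(f)\mathbf{1}_{G/P},\mathbf{1}_{G/P}\rangle\,P_{K}$ is a rank-one operator. Since $\langle\pi_{\nu}(f)\mathbf{1}_{G/P},\mathbf{1}_{G/P}\rangle=\int_{G}f(g)\,\Xi(g)\,{\rm d}g=\langle f,\Xi\rangle_{L^{2}(G)}$ (recall $\Xi(g)=\langle\pi_{\nu}(g)\mathbf{1}_{G/P},\mathbf{1}_{G/P}\rangle$ is real), applying this rank-one operator to $\xi$ and pairing with $\eta$ yields precisely the claimed identity.

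For the inequality I would bound the three resulting factors: $|\langle\xi,\mathbf{1}_{G/P}\rangle|\le\|\xi\|_{1}$ and $|\langle\mathbf{1}_{G/P},\eta\rangle|\le\|\eta\|_{1}$ (since $\|\mathbf{1}_{G/P}\|_{\infty}=1$), while Cauchy--Schwarz against the weight $(1+L(g))^{d}$ gives
\[
\Big|\langle f,\Xi\rangle_{L^{2}(G)}\Big|\le \|f\|_{H^{d}_{L}}\Big(\int_{G}\frac{\Xi^{2}(g)}{(1+L(g))^{2d}}\,{\rm d}g\Big)^{1/2}.
\]
By bi-$K$-invariance of $\Xi$ and $L$ and the Cartan disintegration ${\rm d}g={\rm d}k\,J(H)\,{\rm d}H\,{\rm d}k$, the integral on the right equals $\mathcal{C}_{d}$, finite for $2d>\dim(\mathfrak{a})+2r$ by \eqref{Cd}; this produces the asserted estimate (the constant being $\mathcal{C}_{d}^{1/2}$). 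To reach all $L$-radial $f\in H^{d}_{L}(G)$ I would invoke density: $L$-radial functions are bi-$K$-invariant, $C_{c}(K\backslash G/K)$ is dense among them in the $H^{d}_{L}$-norm, and the left-hand functional is $H^{d}_{L}$-continuous by the inequality just proved, so both the identity and the estimate pass to the limit.

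The main obstacle is the structural lemma that $\mathbf{1}_{G/P}$ spans \emph{all} the $K$-fixed vectors of $\pi_{\nu}$: this is what collapses $\pi_{\nu}(f)$ to a rank-one operator and drives the entire computation, so it is the step I would justify most carefully (via transitivity of $K$ on $G/P$ and $K$-invariance of $\nu$). Everything afterwards is bookkeeping plus a single application of Cauchy--Schwarz. A lesser point requiring care is the density/continuity argument used to pass from $C_{c}(K\backslash G/K)$ to $H^{d}_{L}(G)$, which relies precisely on the inequality to supply continuity of the left-hand side.
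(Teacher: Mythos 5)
Your proof is correct and follows essentially the same route as the paper: both rest on the fact that averaging over $K$ projects onto $\mathbb{C}\,\mathbf{1}_{G/P}$ (you phrase this as $\pi_{\nu}(f)=\langle f,\Xi\rangle_{L^{2}(G)}\,P_{K}$ being rank one, while the paper carries out the same $K$-averaging inside the Cartan integration formula via $\xi^{K}=\langle\xi,\mathbf{1}_{G/P}\rangle\mathbf{1}_{G/P}$), followed by the identical Cauchy--Schwarz estimate against the weight $(1+L)^{d}$. Your remark that the resulting constant is really $\mathcal{C}_{d}^{1/2}$ agrees with what the paper's own computation actually yields, and your density argument for passing to $L$-radial $f\in H^{d}_{L}(G)$ is a harmless extra step that the paper leaves implicit.
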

\begin{proof}
Let $\xi$ in $L^{2}(G/P,\nu)$ and observe that the function $\xi^{K}:x \in G/P \mapsto \int_{K} \xi(k^{-1}x) {\rm d}k \in \mathbb{C}$ is constant. By integration we compute its value and we have $$\xi^{K}=\langle \xi,\textbf{1}_{G/P}\rangle \textbf{1}_{G/P}.$$
 Thus, for all functions $f\in C_{c}(K\backslash G/K)$ and for all vectors $\xi,\eta\in L^{2}(G/P)$ we have:
\begin{align*}
\int_{G}f(g)&\langle \pi_{\nu}(g)\xi,\eta \rangle {\rm d}g = \int_{K}\int_{\frak{a}^{+}}\int_{K}f(ke^{H}k')\langle \pi_{\nu}(ke^{H}k')\xi,\eta \rangle {\rm d}k {\rm d}k' J(H){\rm d}H \\
&= \int_{K}\int_{\frak{a}^{+}}\int_{K}f(e^{H})\langle \pi_{\nu}(e^{H})\pi_{\nu}(k')\xi, \pi_{\nu}(k^{-1})\eta \rangle {\rm d}k {\rm d}k' J(H){\rm d}H \\
&= \int_{\frak{a}^{+}}f(e^{H})\langle  \pi_{\nu}(e^{H}) \bigg(\int_{K}\pi_{\nu}(k')\xi {\rm d}k \bigg), \bigg(\int_{K}\pi_{\nu}(k^{-1})\eta {\rm d}k \bigg) \rangle J(H){\rm d}H\\
&= \int_{\frak{a}^{+}}f(e^{H})\langle  \pi_{\nu}(e^{H}) \xi^{K},\eta^{K} \rangle J(H){\rm d}H\\
&= \langle \xi,\textbf{1}_{G/P}\rangle  \overline{\langle\eta ,\textbf{1}_{G/P} \rangle} \int_{\frak{a}^{+}}f(e^{H})\langle  \pi_{\nu}(e^{H})\mathbf{1}_{G/P},\mathbf{1}_{G/P} \rangle J(H){\rm d}H\\
&=\langle \xi,\textbf{1}_{G/P}\rangle \langle \textbf{1}_{G/P},\eta \rangle  \langle f,\Xi \rangle_{L^{2}(G)}.\\
\end{align*}
Then for all $d$ satisfying $2d>\dim(\frak{a})+2r$ we have by taking the absolute value of the above expression

\begin{align*}
\bigg| &\int_{G}f(g)\langle \pi_{\nu}(g)\xi,\eta \rangle {\rm d}g \bigg|=|\langle \xi,\textbf{1}_{G/P}\rangle|  |\langle \textbf{1}_{G/P},\eta \rangle| \bigg |\int_{\frak{a}^{+}}f(e^{H}) \Xi(e^{H}) J(H){\rm d}H \bigg |\\
&\leq\|\xi\|_{1} \|\eta\|_{1} \bigg(\int_{\frak{a}^{+}} \frac{\Xi^{2}(e^{H})}{(1+L(e^{H}))^{2d}} J(H){\rm d}H \bigg)^{\frac{1}{2}}
\bigg(\int_{\frak{a}^{+}}|f|^{2}(e^{H})(1+L(e^{H}))^{2d} J(H){\rm d}H\bigg)^{\frac{1}{2}}\\
&=\mathcal{C}_{d}\|f\|_{H_{L}^{d}}\|\xi\|_{1} \|\eta\|_{1},
\end{align*}
  where $\mathcal{C}_{d}$ is the constant defined in (\ref{Cd}) and where the last inequality follows from Cauchy-Schwarz inequality. The proof is complete.

\end{proof}

\subsection{Stability}

 Let $\Gamma$ be a discrete subgroup of $G$. A positive function $g\in G\mapsto f(g)\in \mathbb{R}^{+}$ is \emph{stable} on $G$ relative to $\Gamma$ if there exists a relatively compact neighborhood $U$ of the neutral element $e\in G$ and $C>0$ such that $$f(\gamma)\leq  Cf(g) $$  for all $\gamma \in \Gamma$ and for all $g$ in $ \gamma U$.

The next lemma gives examples of interesting stable functions.
\begin{lemma}\label{stable}
Let $\Gamma$ be a discrete group in a semisimple Lie group $G$.  Let $d$ be a real number and consider the function on $G$ defined as $$g\in G \mapsto \frac{\langle \pi_{\nu}(g)\textbf{1}_{G/P},\xi \rangle}{(1+L(g))^{d}}\in \mathbb{R}^{+},$$ with $\xi \in L_{+}^{1}(G/P,\nu)$ a positive function. Then this function is stable on $G$ relative to $\Gamma$.

\end{lemma}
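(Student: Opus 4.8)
The plan is to compare the values of the function at $\gamma$ and at a nearby point $g=\gamma u$, where $u$ ranges over a fixed relatively compact neighborhood $U$ of $e$, handling the numerator and the denominator separately. Write $f(g)=N(g)/(1+L(g))^{d}$ with $N(g)=\langle \pi_{\nu}(g)\mathbf{1}_{G/P},\xi\rangle$. Since $\xi\geq 0$ and $\pi_{\nu}(g)\mathbf{1}_{G/P}\geq 0$ (the representation preserves the positive cone, as recalled in Subsection \ref{qr}), the quantity $N(g)$ is a nonnegative real number, so $f$ is genuinely $\mathbb{R}^{+}$-valued.

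For the denominator I would use that $L=|H(\cdot)|$ is continuous, hence bounded by some $M>0$ on the relatively compact set $U$ (equivalently on $\overline{U}$). The triangle inequality $L(\gamma u)\leq L(\gamma)+L(u)\leq L(\gamma)+M$ then gives $1+L(\gamma u)\leq (1+M)(1+L(\gamma))$, whence $(1+L(\gamma u))^{d}\leq (1+M)^{d}(1+L(\gamma))^{d}$ for every $\gamma\in\Gamma$ and $u\in U$. This is the routine half.

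The heart of the argument is the numerator. First I would record that $(\pi_{\nu}(u)\mathbf{1}_{G/P})(b)=c(u,b)^{1/2}$. Because the Radon--Nikodym cocycle $c$ is continuous and strictly positive, it is bounded below by a constant $m^{2}>0$ on the compact set $\overline{U}\times G/P$; consequently, as elements of the positive cone $L_{+}^{2}(G/P)$,
$$\pi_{\nu}(u)\mathbf{1}_{G/P}\geq m\,\mathbf{1}_{G/P}\qquad\text{for all } u\in U.$$
Applying the operator $\pi_{\nu}(\gamma)$, which preserves $L_{+}^{2}(G/P)$, and using linearity yields $\pi_{\nu}(\gamma u)\mathbf{1}_{G/P}=\pi_{\nu}(\gamma)\pi_{\nu}(u)\mathbf{1}_{G/P}\geq m\,\pi_{\nu}(\gamma)\mathbf{1}_{G/P}$ in $L_{+}^{2}(G/P)$. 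Pairing this inequality with the positive vector $\xi$ preserves it, so $N(\gamma u)\geq m\,N(\gamma)$, that is, $N(\gamma)\leq m^{-1}N(\gamma u)$.

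Combining the two estimates gives, for all $\gamma\in\Gamma$ and all $g=\gamma u\in\gamma U$,
$$f(\gamma)=\frac{N(\gamma)}{(1+L(\gamma))^{d}}\leq\frac{m^{-1}N(\gamma u)}{(1+L(\gamma))^{d}}\leq\frac{(1+M)^{d}}{m}\cdot\frac{N(\gamma u)}{(1+L(\gamma u))^{d}}=\frac{(1+M)^{d}}{m}\,f(g),$$
so the function is stable with $C=(1+M)^{d}/m$. The main point to justify carefully is the uniform positive lower bound $c(u,b)\geq m^{2}$ on $\overline{U}\times G/P$: it rests on the continuity and positivity of the cocycle together with compactness of both factors, and it is precisely this bound, combined with the positivity-preserving property of $\pi_{\nu}$, that lets me transfer an inequality between vectors into one between the pairings $N(\gamma u)$ and $N(\gamma)$ despite $\pi_{\nu}(\gamma)$ being merely unitary.
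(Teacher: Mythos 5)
Your proof is correct. The denominator half is handled exactly as in the paper, via the triangle inequality for $L$ (the paper records the equivalent estimate $\tfrac{1}{1+L(\gamma x)}\leq \tfrac{1+L(x)}{1+L(\gamma)}$). The difference is in the numerator: the paper simply delegates the stability of $g\mapsto \langle \pi_{\nu}(g)\mathbf{1}_{G/P},\xi\rangle$ to \cite[Lemma 3.8]{Boy2}, whereas you prove it directly, and your argument --- the identity $(\pi_{\nu}(u)\mathbf{1}_{G/P})(b)=c(u,b)^{1/2}$, the uniform lower bound $c\geq m^{2}>0$ on the compact set $\overline{U}\times G/P$, and the transfer of the resulting vector inequality through the positivity-preserving operator $\pi_{\nu}(\gamma)$ before pairing with $\xi\geq 0$ --- is exactly the natural mechanism behind that cited lemma, so your write-up has the advantage of being self-contained. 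One microscopic point: the lemma allows any real $d$, and your final constant $(1+M)^{d}/m$ only works for $d\geq 0$; for $d<0$ the same triangle inequality (applied in the form $1+L(\gamma)\leq(1+M)(1+L(\gamma u))$) gives the bound with $(1+M)^{|d|}/m$ instead, so nothing essential changes.
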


\begin{proof}
Observe that the triangle inequality implies for all $\gamma \in \Gamma$ and $x\in G$ $$\frac{1}{1+L(\gamma x)}\leq \frac{1+L(x)}{(1+L(\gamma))}.$$
 Then combine the above observation with \cite[Lemma 3.8]{Boy2} to complete the proof.

\end{proof}
The notion of stability introduced above enables us to discretize inequalities given by integrals on the ambient group to sums on discrete subgroups. More precisely we have the following proposition.

\begin{prop}\label{discret}
Let $f$ be a positive $L$-radial function satisfying $  \langle f ,\Xi \rangle _{L^{2}(G)}<\infty$ and assume that $f$ is stable relative to $\Gamma$. Assume also that $\xi$ is a positive vector in $L_{+}^{1}(G/P)$. There exists a constant $C>0$ such that 
$$\sum_{\gamma}f(\gamma)\langle \pi_{\nu}(\gamma) \textbf{1}_{G/P},\xi \rangle \leq C  \langle f ,\Xi \rangle _{L^{2}(G)}\|\xi\|_{1}.$$
\end{prop}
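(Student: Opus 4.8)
The plan is to reduce the discrete sum to the integral identity of Proposition \ref{radialRD} by a standard discretization argument that exploits the two stability hypotheses. Set $\phi(g):=\langle \pi_{\nu}(g)\mathbf{1}_{G/P},\xi\rangle$. Since $\pi_{\nu}$ preserves the positive cone and $\xi\in L^{1}_{+}(G/P)$ is positive, the function $\phi$ is nonnegative and finite at every point (indeed $\phi(g)=\int_{G/P}c(g,b)^{1/2}\xi(b)\,{\rm d}\nu(b)\leq (\sup_{b}c(g,b)^{1/2})\|\xi\|_{1}<\infty$), and the sum to be bounded is precisely $\sum_{\gamma}f(\gamma)\phi(\gamma)$.

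First I would record the continuous analogue. Applying Proposition \ref{radialRD} with the vector $\mathbf{1}_{G/P}$ in the first slot and $\xi$ in the second, and using that $\nu$ is a probability measure (so $\langle \mathbf{1}_{G/P},\mathbf{1}_{G/P}\rangle=1$) together with $\langle \mathbf{1}_{G/P},\xi\rangle=\int_{G/P}\xi\,{\rm d}\nu=\|\xi\|_{1}$ (as $\xi\geq 0$), one obtains
$$\int_{G}f(g)\phi(g)\,{\rm d}g=\|\xi\|_{1}\,\langle f,\Xi\rangle_{L^{2}(G)},$$
which is finite by hypothesis. A brief justification is needed here that Proposition \ref{radialRD}, stated for $L^{2}$ vectors, still applies with $\xi\in L^{1}_{+}$: the $K$-averaging step in its proof uses only the identity $\xi^{K}=\|\xi\|_{1}\mathbf{1}_{G/P}$, which holds for positive $L^{1}$ functions by $K$-invariance of $\nu$; alternatively one approximates $\xi$ from below by $L^{2}$ functions and passes to the limit by monotone convergence.

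Next I would set up the discretization. By hypothesis $f$ is stable relative to $\Gamma$, and by Lemma \ref{stable} (applied with $d=0$) the function $\phi$ is stable relative to $\Gamma$; since a product of nonnegative stable functions is stable, there are a relatively compact symmetric neighborhood $U$ of $e$ and a constant $C'>0$ with $f(\gamma)\phi(\gamma)\leq C'\,f(g)\phi(g)$ for all $\gamma\in\Gamma$ and all $g\in\gamma U$ (one first shrinks $U$ to a neighborhood that works simultaneously for $f$ and $\phi$, which only strengthens the defining inclusion and hence preserves stability). As $\Gamma$ is discrete there is a neighborhood $W$ of $e$ with $\Gamma\cap W=\{e\}$, and after shrinking $U$ further so that $UU^{-1}\subseteq W$, the translates $\{\gamma U\}_{\gamma\in\Gamma}$ become pairwise disjoint. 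Writing $|U|=\int_{U}{\rm d}g>0$ and using left-invariance of Haar measure (so $\int_{\gamma U}{\rm d}g=|U|$), I then estimate
$$\sum_{\gamma}f(\gamma)\phi(\gamma)=\frac{1}{|U|}\sum_{\gamma}\int_{\gamma U}f(\gamma)\phi(\gamma)\,{\rm d}g\leq\frac{C'}{|U|}\sum_{\gamma}\int_{\gamma U}f(g)\phi(g)\,{\rm d}g\leq\frac{C'}{|U|}\int_{G}f(g)\phi(g)\,{\rm d}g,$$
where the last inequality uses disjointness of the $\gamma U$ together with $f\phi\geq 0$. Combining this with the integral identity yields the assertion with $C=C'/|U|$.

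The main obstacle is the stability bookkeeping in the third step: one must check that the product $f\phi$ is genuinely stable for a single neighborhood $U$, and, more delicately, that this stability neighborhood can be shrunk to also make the translates $\gamma U$ pairwise disjoint \emph{without} destroying either stability estimate. Everything else — the identity furnished by Proposition \ref{radialRD} and the disjoint-support comparison of the sum with the integral — is routine once the positivity of $f$, $\phi$ and $\xi$ is in hand.
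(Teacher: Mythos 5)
Your proposal is correct and follows essentially the same route as the paper: stability of $f$ (by hypothesis) and of $g\mapsto\langle\pi_{\nu}(g)\mathbf{1}_{G/P},\xi\rangle$ (by Lemma \ref{stable}) gives a pointwise comparison on disjoint translates $\gamma U$, which after integration and summation reduces the discrete sum to the integral over $G$, handled by Proposition \ref{radialRD}. Your extra care about applying Proposition \ref{radialRD} to a vector in $L^{1}_{+}$ rather than $L^{2}$ is a reasonable refinement the paper passes over silently, but it does not change the argument.
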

\begin{proof}
Since $g\mapsto \langle \pi_{\nu}(g) \textbf{1}_{G/P},\xi \rangle$ is stable by Proposition \ref{stable}, there exists  $U$ a neighbourhood of the identity $e$, small enough so that $\gamma U\cap \gamma' U=\varnothing$ for all $\gamma\neq \gamma' \in \Gamma$  and a constant $C>0$ such that for all $\gamma$ in $\Gamma$ and for all $x$ in $U$ we have
 $$\langle \pi_{\nu}(\gamma) \textbf{1}_{G/P},\xi \rangle\leq C \langle \pi_{\nu}(\gamma x) \textbf{1}_{G/P},\xi \rangle\mbox{ and } f(\gamma)\leq C f(\gamma x).$$
 It follows  by integration that there exists $C>0$ such that
 \begin{align*}
f(\gamma)\langle \pi_{\nu}(\gamma) \textbf{1}_{G/P},\xi \rangle \leq &\frac{C}{{\rm Vol}(U)} \int_{U}f(\gamma x)\langle \pi_{\nu}(\gamma x) \textbf{1}_{G/P},\xi \rangle {\rm d}x,
\end{align*}
where ${\rm d}x$ is the Haar measure. 
Hence, we have 
\begin{align*}
\sum_{\gamma \in \Gamma}f(\gamma)\langle \pi_{\nu}(\gamma) \textbf{1}_{G/P},\xi \rangle &\leq \frac{C}{{\rm Vol}(U)} \sum_{\gamma \in \Gamma} \int_{U}  f(\gamma x)\langle \pi_{\nu}(\gamma x) \textbf{1}_{G/P},\xi \rangle {\rm d}x \\
&=\frac{C}{{\rm Vol}(U)} \sum_{\gamma \in \Gamma} \int_{\gamma U}  f(g)\langle \pi_{\nu}(g) \textbf{1}_{G/P},\xi \rangle {\rm d}g\\
&\leq\frac{C}{{\rm Vol}(U)}  \int_{G}  f(g)\langle \pi(g) \textbf{1}_{G/P},\xi \rangle {\rm d}g\\
&\leq\frac{C}{{\rm Vol}(U)}\langle f,\Xi \rangle_{L^{2}(G)}  \|\xi\|_{1},
\end{align*}
where the last inequality follows from Proposition \ref{radialRD}. 
\end{proof}

\subsection{Inequalities}
We start by stating and proving an elementary lemma using Cauchy-Schwarz inequality.
\begin{lemma}\label{CS}For all $\xi,\eta \in L^{2}(G/P,\nu)$ we have:
 $$|\langle \pi_{\nu}(g)\xi,\eta \rangle|\leq  \bigg(\langle\pi_{\nu}(g)\textbf{1}_{G/P},|\eta|^{2} \rangle\bigg) ^{\frac{1}{2}} \bigg( \langle \pi_{\nu}(g^{-1}) \textbf{1}_{G/P},|\xi|^{2} \rangle\bigg)^{\frac{1}{2}}. $$
\end{lemma}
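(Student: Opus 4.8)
The plan is to apply the Cauchy--Schwarz inequality for the inner product of $L^2(G/P,\nu)$, but written against the positive measure induced by the quasi-regular representation rather than against $\nu$ directly. The key observation is that for any $g\in G$ the matrix coefficient $\langle \pi_\nu(g)\,\cdot\,,\,\cdot\,\rangle$ is built from the Radon--Nikodym cocycle $c(g,b)^{1/2}$, and the cone $L^2_+(G/P)$ is preserved by $\pi_\nu(g)$, so that pairings of positive functions against $\textbf{1}_{G/P}$ are genuinely nonnegative. I would begin by unwinding the definition of the matrix coefficient explicitly as an integral over $G/P$:
\begin{equation*}
\langle \pi_\nu(g)\xi,\eta\rangle = \int_{G/P} c(g,b)^{\frac12}\,\xi(g^{-1}b)\,\overline{\eta(b)}\,{\rm d}\nu(b).
\end{equation*}

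Next I would bound the integrand pointwise by its modulus and apply Cauchy--Schwarz to the factorization $c(g,b)^{1/2}|\xi(g^{-1}b)||\eta(b)| = \bigl(c(g,b)^{1/4}|\eta(b)|\bigr)\bigl(c(g,b)^{1/4}|\xi(g^{-1}b)|\bigr)$. This splits the integral into two square roots, the first being $\int_{G/P} c(g,b)^{1/2}|\eta(b)|^2\,{\rm d}\nu(b)$ and the second $\int_{G/P} c(g,b)^{1/2}|\xi(g^{-1}b)|^2\,{\rm d}\nu(b)$. The first factor is precisely $\langle \pi_\nu(g)\textbf{1}_{G/P},|\eta|^2\rangle$, since $\pi_\nu(g)\textbf{1}_{G/P}$ evaluated at $b$ is $c(g,b)^{1/2}$ (because $\textbf{1}_{G/P}(g^{-1}b)=1$). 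The main technical point, and the place I would be most careful, is to recognize the second factor as $\langle \pi_\nu(g^{-1})\textbf{1}_{G/P},|\xi|^2\rangle$; this requires a change of variables $b\mapsto gb$ together with the cocycle identity $c(g,b)\,c(g^{-1},gb)=1$, which converts $\int c(g,b)^{1/2}|\xi(g^{-1}b)|^2\,{\rm d}\nu(b)$ into $\int c(g^{-1},b)^{1/2}|\xi(b)|^2\,{\rm d}\nu(b)$.

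The cocycle manipulation in that second factor is the one step that is not purely formal: one must track how the pushforward of $\nu$ under $g$ interacts with the square-root weight, using ${\rm d}g_*\nu/{\rm d}\nu = c(g,b)$ and the chain-rule/cocycle relation to flip $g$ to $g^{-1}$. Everything else reduces to a single application of Cauchy--Schwarz and the elementary fact that $\pi_\nu(g)\textbf{1}_{G/P}(b)=c(g,b)^{1/2}$. I expect no genuine obstacle beyond bookkeeping, since positivity of the cone guarantees both right-hand factors are nonnegative real numbers so that their square roots make sense.
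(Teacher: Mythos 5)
Your proof is correct and follows essentially the same route as the paper: reduce to positive $\xi,\eta$ (or take moduli), write the matrix coefficient as an integral, apply Cauchy--Schwarz with the split $c(g,b)^{1/4}|\eta(b)|\cdot c(g,b)^{1/4}|\xi(g^{-1}b)|$, and identify the two resulting factors. The only cosmetic difference is that the paper converts the factor $\int c(g,b)^{1/2}|\xi(g^{-1}b)|^{2}\,{\rm d}\nu(b)=\langle\pi_{\nu}(g)|\xi|^{2},\textbf{1}_{G/P}\rangle$ into $\langle\pi_{\nu}(g^{-1})\textbf{1}_{G/P},|\xi|^{2}\rangle$ by invoking unitarity (the adjoint of $\pi_{\nu}(g)$), whereas you carry out the equivalent change of variables and cocycle computation by hand.
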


\begin{proof}
Since $\pi_{\nu}$ preserves the cone of positive functions, we may assume that $\xi$ and $\eta$ are in $L_{+}^{2}(G/P)$. Then the Cauchy-Schwarz inequality implies
	\begin{align*}
	 \left\langle \pi_\nu(g)\xi,\eta\right\rangle^2
	&=\left(\int_{G/P}\xi(g^{-1}x)
	c(g,x)^{\frac{1}{2}}\eta(x)d\nu(x)\right)^2\\
	&\leq \bigg(\int_{G/P}\xi(g^{-1}x)^2
	c(g,x)^{\frac{1}{2}}d\nu(x)\bigg)\bigg(
	\int_{G/P}
	c(g,x)^{\frac{1}{2}}\eta(x)^2d\nu(x)\bigg)\\
	&=\left\langle \pi_\nu(g)\xi^2,\textbf{1}_{G/P}\right\rangle
	\left\langle \pi_\nu(g)\textbf{1}_{G/P},\eta^2\right\rangle\\
	&=	 \left\langle \pi_\nu(g^{-1})\textbf{1}_{G/P},\xi^2\right\rangle \left\langle \pi_\nu(g)\textbf{1}_{G/P},\eta^2\right\rangle
.
	\end{align*}

\end{proof}

\section{Proofs}\label{sec3}

Before proceeding to the proof of Theorem \ref{maintheo}, let us define for all $d>0$ the function $\phi_{d}$ as  \begin{equation}\phi_{d}:g\in G \mapsto  \frac{\Xi(g)}{(1+L(g))^{d}}\in \mathbb{R}^{+}.\end{equation} Observe that for all $d>0$, the function $\phi_{d}$ is a bi-K-invariant function and observe that whenever $2d> \dim(\frak{a})+2r$, we have 
\begin{equation}
\langle \phi_{2d},\Xi \rangle_{L^{2}(G)}=\mathcal{C}_{d}<+\infty.
\end{equation} Observe also that Proposition \ref{stable} implies that $\phi_{d}$ is stable on $G$ relative to any discrete subgroup $\Gamma$ of $G$. This function will play a major role in the proof. \\

\begin{proof}[Proof of Item (1) of Theorem \ref{maintheo}]
 Pick $d>0$ such that $2d> \dim(\frak{a})+2r$ and let $f_{1},f_{2}$ be in $S_{L}^{2d}(\Gamma)$. For all $g\in \Gamma$ we have
\begin{align*}
|f_{1}* f_{2}|(g)&=\sum_{\gamma \in \Gamma} f_{1}(\gamma)f_{2}(\gamma^{-1}g)\\
&\leq  \|f_{1}\|_{S_{L}^{2d}(\Gamma)} \|f_{2}\|_{S_{L}^{2d}(\Gamma)} \sum_{\gamma \in \Gamma} \frac{\Xi(\gamma)}{(1+L(\gamma))^{2d}}\frac{\Xi(\gamma^{-1}g)}{(1+L(\gamma^{-1}g))^{2d}}.
\end{align*}
Write now
\begin{align*}
\sum_{\gamma \in \Gamma} &\frac{\Xi(\gamma)}{(1+L(\gamma))^{2d}}\frac{\Xi(\gamma^{-1}g)}{(1+L(\gamma^{-1}g))^{2d}} =\\
&\sum_{ \left\{\gamma| L(\gamma)\leq \frac{L(g)}{2} \right\}} \frac{\Xi(\gamma)}{(1+L(\gamma))^{2d}}\frac{\Xi(\gamma^{-1}g)}{(1+L(\gamma^{-1}g))^{2d}} +\sum_{ \left\{\gamma| L(\gamma)> \frac{L(g)}{2} \right\}} \frac{\Xi(\gamma)}{(1+L(\gamma))^{2d}}\frac{\Xi(\gamma^{-1}g)}{(1+L(\gamma^{-1}g))^{2d}}.
\end{align*}

Observe that $ L(\gamma)\leq \frac{L(g)}{2} \Rightarrow L(\gamma^{-1}g)\geq \frac{L(g)}{2}$.		
							
We have for the first term:
\begin{align*}
\sum_{ \left\{\gamma| L(\gamma)\leq \frac{L(g)}{2} \right\}} \frac{\Xi(\gamma)}{(1+L(\gamma))^{2d}}\frac{\Xi(\gamma^{-1}g)}{(1+L(\gamma^{-1}g))^{2d}} &\leq\frac{2^{2d}}{(1+L(g))^{2d}} \sum_{ \left\{\gamma| L(\gamma)\leq \frac{L(g)}{2} \right\}} \frac{\Xi(\gamma)\Xi(\gamma^{-1}g)}{(1+L(\gamma))^{2d}} \\
& \leq\frac{ 2^{2d}}{(1+L(g))^{2d}} \sum_{ \gamma \in \Gamma} \frac{\Xi(\gamma) \Xi(\gamma^{-1}g)}{(1+L(\gamma))^{2d}}\\
& =\frac{ 2^{2d}}{(1+L(g))^{2d}} \sum_{ \gamma \in \Gamma}\phi_{2d}(\gamma) \langle \pi_{\nu}(\gamma)\textbf{1}_{G/P},\xi \rangle,
\end{align*}			
with 	$\xi:= \pi_{\nu}(g)\textbf{1}_{G/P}\in L^{2}(G/P,\nu).$
					
We have for the second term:
\begin{align*}
\sum_{ \left\{\gamma| L(\gamma)> \frac{L(g)}{2} \right\}} \frac{\Xi(\gamma)}{(1+L(\gamma))^{2d}}\frac{\Xi(\gamma^{-1}g)}{(1+L(\gamma^{-1}g))^{2d}}
&\leq \frac{2^{2d}}{(1+L(g))^{2d}}  \sum_{ \left\{\gamma| L(\gamma)> \frac{L(g)}{2} \right\}} \frac{\Xi(\gamma)\Xi(\gamma^{-1}g)}{(1+L(\gamma^{-1}g))^{2d}}\\
& \leq \frac{2^{2d}}{(1+L(g))^{2d}}  \sum_{ \gamma \in \Gamma } \frac{\Xi(\gamma)\Xi(\gamma^{-1}g)}{(1+L(\gamma^{-1}g))^{2d}}\\
&= \frac{2^{2d}}{(1+L(g))^{2d}}\sum_{ \gamma \in \Gamma } \frac{\Xi(g\gamma^{-1})\Xi(\gamma)}{(1+L(\gamma))^{2d}}\\
&= \frac{2^{2d}}{(1+L(g))^{2d}}\sum_{ \gamma \in \Gamma } \phi_{2d}(\gamma) \langle \pi_{\nu}(\gamma)\textbf{1}_{G/P},\eta \rangle
\end{align*}							
where the first equality comes from the change of variable $\gamma'=\gamma^{-1}g$ and where $\eta:=\pi_{\nu}(g^{-1})\textbf{1}_{G/P}\in L^{2}(G/P,\nu).$	\\								
			
Since $\phi_{d}$ is a stable bi-K-invariant function, Proposition \ref{discret} combined with the fact $\|\pi_{\nu}(g) \textbf{1}_{G/P}\|_{1}=\Xi(g)=\|\pi_{\nu}(g^{-1}) \textbf{1}_{G/P}\|_{1}$ imply for the first term that there exists $C>0$ so that for all $g\in \Gamma$ $$\sum_{ \left\{\gamma| L(\gamma)\leq \frac{L(g)}{2} \right\}} \frac{\Xi(\gamma)}{(1+L(\gamma))^{2d}}\frac{\Xi(\gamma^{-1}g)}{(1+L(\gamma^{-1}g))^{2d}} \leq C\frac{2^{2d}}{(1+L(g))^{2d}}\mathcal{C}_{d}\Xi(g).$$ A similar inequality holds for the second term. Therefore there exist $d,C>0$ such that for all $g\in \Gamma$ $$|f_{1}* f_{2}|(g)\leq  C \times \mathcal{C}_{d}\|f_{1}\|_{S_{L}^{2d}(\Gamma)} \|f_{2}\|_{S_{L}^{2d}(\Gamma)} \frac{\Xi(g)}{(1+L(g))^{2d}},$$ hence 
$$\| f_{1}* f_{2}\|_{S_{L}^{2d}(\Gamma)} \leq C \times \mathcal{C}_{d}  \|f_{1}\|_{S_{L}^{2d}(\Gamma)} \|f_{2}\|_{S_{L}^{2d}(\Gamma)} $$
and the proof is complete.

\end{proof}

\begin{proof}[Proof of Item (2) of Theorem \ref{maintheo}]

It is enough to prove that there exist $C,d>0$ so that for all finitely supported positive functions $f$ on $\Gamma$ $$\|\lambda_{\Gamma}(f)\|\leq C \|f\|_{S_{L}^{d}(\Gamma)}.$$
By Shalom's Lemma \ref{shalom} it is sufficient to prove 
  that there exist $C,d>0$ so that for all finitely supported positive functions $f$ on $\Gamma$ $$\|\pi_{\nu}(f)\|\leq C \|f\|_{S_{L}^{d}(\Gamma)}.$$
  Fix $d>0$ such that $2d> \dim(\frak{a})+2r$ and let $f$ be a positive function in $S_{L}^{2d}(\Gamma)$. We have for all positive vectors $\xi\in L_{+}^{1}(G/P,\nu)$ $$\sum_{\gamma}f(\gamma)\langle \pi_{\nu}(\gamma)\textbf{1}_{G/P},\xi \rangle \leq \|f\|_{S^{2d}_{L}(\Gamma)} \sum_{\gamma}\frac{\Xi(\gamma)}{(1+L(\gamma))^{2d}}\langle \pi_{\nu}(\gamma)\textbf{1}_{G/P},\xi \rangle.$$ 
  Notice also that the change of variable $\gamma'=\gamma^{-1}$ combined with the symmetry properties of $\Xi$ and $L$ imply $$\sum_{\gamma}f(\gamma)\langle \pi_{\nu}(\gamma^{-1})\textbf{1}_{G/P},\xi \rangle \leq \|f\|_{S^{2d}_{L}(\Gamma)} \sum_{\gamma}\frac{\Xi(\gamma)}{(1+L(\gamma))^{2d}}\langle \pi_{\nu}(\gamma)\textbf{1}_{G/P},\xi \rangle.$$

  Therefore Proposition \ref{discret} implies that there exist $d,C>0$ such that  for all $f\in S^{2d}_{L}(\Gamma)$ and for all positive vectors $\xi \in L_{+}^{1}(G/P,\nu)$
 \begin{equation}\label{discrete}
 \sum_{\gamma}f(\gamma)\langle \pi_{\nu}(\gamma)\textbf{1}_{G/P},\xi \rangle \leq C\times \mathcal{C}_{d} \| f\|_{S^{2d}_{L}(\Gamma)}\|\xi\|_{1}.
 \end{equation}

We have for all $\xi$ and $\eta$ in $L^{2}(G/P)$
\begin{align*}
|\langle \pi_{\nu}(f)\xi,\eta \rangle| &\leq \sum_{\gamma \in \Gamma}|f(\gamma)| \langle \pi_{\nu}(\gamma) |\xi|,|\eta|\rangle \\
&\leq \sum_{\gamma \in \Gamma}|f(\gamma)|  \big(\langle \pi_{\nu}(\gamma) \textbf{1}_{G/P},|\eta|^{2}\rangle\big)^{\frac{1}{2}} \big(\langle \pi_{\nu}(\gamma^{-1}) \textbf{1}_{G/P},|\xi|^{2}\rangle\big)^{\frac{1}{2}}\\  
&\leq \bigg( \sum_{\gamma \in \Gamma}|f(\gamma)| \langle \pi_{\nu}(\gamma) \textbf{1}_{G/P},|\eta|^{2}\rangle\bigg)^{\frac{1}{2}}  \bigg( \sum_{\gamma \in \Gamma}|f(\gamma)| \langle \pi_{\nu}(\gamma^{-1}) \textbf{1}_{G/P},|\xi|^{2}\rangle\bigg) ^{\frac{1}{2}},
\end{align*}
where the second inequality follows from Lemma \ref{CS} and the last inequality follows from Cauchy-Schwarz inequality.\\
Then Equation (\ref{discrete}) implies the existence of $C>0$ such that
\begin{align*}
|\langle \pi_{\nu}(f)\xi,\eta \rangle|&\leq (C \| f\|_{S^{2d}_{L}(\Gamma)}\mathcal{C}_{d}\|\xi\|^{2}_{2})^{\frac{1}{2}}(C \| f\|_{S^{2d}_{L}(\Gamma)} \mathcal{C}_{d}\|\eta\|^{2}_{2})^{\frac{1}{2}}\\
&=C\times \mathcal{C}_{d}\| f\|_{S^{2d}_{L}(\Gamma)} \|\xi\|_{2}\|\eta\|_{2}.
\end{align*}
By the definition of $\|\cdot\|_{op}$ we have fund $d,C>0$ such that for all finitely supported functions $f$ on $\Gamma$ we have $$\|\pi_{\nu}(f)\|\leq C \|f\|_{S_{L}^{d}(\Gamma)},$$ and the proof is done.

  \end{proof}

\end{document}